\documentclass{article}
\usepackage{amssymb}
\usepackage{amsmath}
\usepackage{amsthm}
\usepackage{bm}
\usepackage{pb-diagram}
\usepackage{subcaption}
\captionsetup{textfont = sl}

\newcommand{\g}{\mathfrak{g}}

\newcommand{\h}{\mathfrak{h}}
\newcommand{\mv}{\mathfrak{v}}
\newcommand{\mw}{\mathfrak{w}}
\newtheorem{definition}{Definition}[section]
\newtheorem{remark}{Remark}[section]
\newtheorem{theorem}{Theorem}[section]
\newtheorem{proposition}{Proposition}[section]

\newtheorem{lemma}{Lemma}[section]
\newtheorem{example}{Example}[section]
\newtheorem{corollary}{Corollary}[section]

\usepackage{amscd}
\newcommand{\R}{\mathbb{R}}

\newcommand{\Z}{\mathbb{Z}}
\newcommand{\N}{\mathbb{N}}
\newcommand{\T}{\mathbb{T}}
\usepackage[dvipdfmx]{graphicx}
\begin{document}

\title{Collapsed limits of compact Heisenberg manifolds with sub-Riemannian metrics}
\author{Kenshiro Tashiro}
\date{}
\maketitle

\begin{abstract}
In this paper,
we show that every collapsed Gromov--Hausdorff limit of compact Heisenberg manifolds is isometric to a flat torus.
Here we say that a sequence of sub-Riemannian manifolds collapses if their total measure with respect to the Popp's volume or the minimal Popp's volume converges to zero.

In the appendix,
we give the systolic inequality on sub-Riemannian Heisenberg manifolds,
and observe that the exponent of the total measure is equal to the inverse of the Hausdorff dimension.
\end{abstract}

\section{Introduction}

A sub-Riemannian manifold is a triple $(M,\mathcal{D},g)$,
where $M$ is a smooth manifold,
$\mathcal{D}$ is a sub-bundle of the tangent bundle,
and $g$ is a metric on $\mathcal{D}$.
In the same way to Riemannian manifolds,
we can put a length structure and the associated distace function on bracket generating sub-Riemannian manifolds (please see Definition \ref{dfnbracket}).
Sub-Riemannian manifolds appear as Gromov--Hausdorff limits of a sequence of Riemannian manifolds.
In general their sectional,
Ricci and scalar curvature diverge when the limit space is non-Riemannian.
However some sub-Riemannian manifolds have the measure contraction property which reflects the Ricci curvature lower bound in a sense \cite{jui, rif, riz, bar}.
These results lead us to study sub-Riemannian manifolds as an example of the singular Gromov--Hausdorff limit spaces.

In \cite{tas},
the author began to study the topological type of the Gromov--Hausdorff limit space of a sequence of sub-Riemannian manifolds in the following setting.
Let $H_n$ be the $n$-Heisenberg Lie group,
$\h_n$ the associated Lie algebra,
and $\Gamma$ a lattice in $H_n$.
A quatient space $\Gamma\backslash H_n$ is called a \textit{compact Heisenberg manifold}.
Let $\mv$ be a subspace in $\h_n$ and $\langle\cdot,\cdot\rangle$ an inner product on $\mv$.
It induces the left invariant sub-Riemannian structure on $H_n$ (see Example \ref{exlie}).
Since the induced geodesic distance on $H_n$ has the isometric action $\Gamma$ from the left,
thus we obtain a quatient distance on $\Gamma\backslash H_n$ in a canonical way.
We also call such a distance on $\Gamma\backslash H_n$ \textit{left invariant}.

In \cite{tas},
the author studied the non-collapsed limits of compact Heisenberg manifolds with left invariant sub-Riemannian metrics.
Here we say that a sequence is \textit{non-collapsed} if the total measure with respect to the minimal Popp's volume is uniformly bounded below by a positive constant.
The minimal Popp's volume is introduced by the author to compute a family of rank varying sub-Riemannian metrics,
please see Section \ref{secmin}.
Namely we showed that the non-collapsed limit of a sequence of compact Heisenberg manifolds with left invariant sub-Riemannian metrics are again diffeomorphic to a compact Heisenberg manifold of the same dimension.
Since the Heisenberg group can be regarded as a `flat' space in sub-Riemannian geometry,
this result is a sub-Riemannian version of the Mahler's compactness theorem.

In this paper,
we study collapsed Gromov--Hausdorff limits of compact Heisenberg manifolds with left invariant sub-Riemannian metrics.
We say that a sequence of sub-Riemannian manifolds \textit{collapse} if the total measure with respect to the minimal Popp's volume converges to zero.
It complements the above result.

\begin{theorem}[Main result]\label{thmmain}
	Let $\left\{(\Gamma_k\backslash H_n,dist_k)\right\}_{k\in\N}$ be a sequence of compact Heisenberg manifolds endowed with left invariant sub-Riemannian metrics.
Assume that this sequence converges in the Gromov--Hausdorff topology,
the diameter is uniformly bounded above by $D>0$,
and the total measure with respect to the minimal Popp's volume converges to zero.	
Then the limit space is isometric to a flat torus of lower dimension.
\end{theorem}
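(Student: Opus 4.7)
The plan is to reduce to a family of left invariant sub-Riemannian structures on a single quotient manifold, analyze the algebraic expression of the minimal Popp volume in terms of these structures, and then use the systolic inequality of the appendix to pinpoint the collapsing direction.

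First, by applying automorphisms of $H_n$ I would normalize the lattices to a single lattice $\Gamma \subset H_n$, absorbing the change of lattice into a change of the sub-Riemannian data, so that the sequence becomes a family of pairs $(\mv_k, g_k)$ on the fixed quotient $\Gamma \backslash H_n$. Since $[\h_n, \h_n]$ equals the center $Z$ of $\h_n$, bracket generation forces $\dim \mv_k \in \{2n, 2n+1\}$, and $\mv_k$ to be transverse to $Z$ in the genuinely sub-Riemannian case. Passing to a subsequence we may assume that the rank is constant and that $\mv_k$ converges in the Grassmannian of $\h_n$. The total minimal Popp volume then becomes an explicit polynomial expression in the entries of a $g_k$-orthonormal basis of $\mv_k$ and in the bracket map $\Lambda^2 \mv_k \to Z$.

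Next, I would combine the diameter bound with the collapse hypothesis. Enlarging the $g_k$-unit ball of $\mv_k$ can only decrease the diameter, so the $g_k$-norms of a fixed basis of $\mv_k$ are bounded above by constants depending only on $D$ and $\Gamma$. The assumption that the total minimal Popp volume tends to zero therefore forces either one of those norms, or the bracket factor, to degenerate. In either case one produces horizontal paths on $\Gamma \backslash H_n$ of arbitrarily small length whose lifts wind nontrivially around the center of $H_n$. The systolic inequality of the appendix makes this quantitative: the shortest closed geodesic generating the central fiber of the circle bundle $\Gamma \backslash H_n \to \T^{2n}$ is bounded by a constant times $\mathrm{vol}^{1/Q}$, with $Q$ the Hausdorff dimension, so this fiber collapses in the Gromov--Hausdorff sense.

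Finally, the center of $H_n$ acts by sub-Riemannian isometries on $\Gamma \backslash H_n$, so the collapse of its orbits lets us pass, in the limit, to a distance on the quotient $\T^{2n}$. On this base the induced metric is left invariant on an abelian Lie group, hence flat. A final application of Mahler compactness to the resulting sequence of flat tori of bounded diameter, together with any further degenerations of the projected inner products, identifies the Gromov--Hausdorff limit as a flat torus of dimension strictly less than $2n+1$. The main obstacle I anticipate is the possible rank jump between $\dim \mv_k = 2n$ and $\dim \mv_k = 2n+1$; treating them uniformly is precisely where the minimal Popp volume of Section \ref{secmin} must be used, since the Riemannian and sub-Riemannian Popp volumes scale differently in the underlying data.
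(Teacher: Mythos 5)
Your overall architecture (show the circle fibers collapse, then pass to the base flat tori) matches the paper, but two of your key steps do not survive scrutiny. First, the normalization ``apply automorphisms of $H_n$ to reduce to a single lattice $\Gamma$'' is impossible: by Theorem \ref{thm2-2.1} the lattices $\Gamma_{\bm{r}}$ are pairwise non-isomorphic for distinct $\bm{r}\in D_n$, so no automorphism of $H_n$ identifies them and the quotients are not even diffeomorphic. Since $D_n$ is infinite, the sequence may run through infinitely many lattice types, and this case needs its own argument. The paper handles it in Lemma \ref{lem9-2}: if $r_n(k)\to\infty$, the diameter bound alone (no volume hypothesis needed) forces $\|r_n(k)X_n\|_{A_k}\leq 2D$ and $\|X_{2n}\|_{A_k}\leq 2D$, and an explicit commutator rectangle of side lengths $\sqrt{r_n(k)/2}$ and $1/\sqrt{2r_n(k)}$ reaches $\exp(\tfrac12 Z)$ with length $O(D/\sqrt{r_n(k)})\to 0$. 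Your proposal has no substitute for this step.

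Second, invoking the systolic inequality of the appendix to show the fiber collapses does not work, for two reasons. The systole is the length of the shortest noncontractible loop, which may be realized in a base (horizontal) direction rather than along the central fiber, so $sys\to 0$ does not imply $diam(F_b)\to 0$. Worse, Theorem \ref{thmapp1} is proved for the Popp's volume in the corank-one case, whereas the hypothesis of the main theorem is on the \emph{minimal} Popp's volume; the example at the end of the appendix (the matrices $A_k$ and $B_k$) shows explicitly that no systolic inequality with any fixed exponent holds for the minimal Popp's volume, so the quantitative step you rely on is false in the generality you need. The paper instead computes the fiber diameter exactly as $\widetilde{dist}_{A_k}(e,\exp(\tfrac12 Z))$ via Lemma \ref{lemvertical}, writes the total measure as $\min\{|\rho_{A_k}|^{-1},\delta(A_k)^{-1}\}|\det\tilde{A}_k|^{-1}\prod r_i$ via Proposition \ref{propminvol}, and checks in the two degeneration cases ($\min\{|\rho_{A_k}|^{-1},\delta(A_k)^{-1}\}\to 0$ or $|\det\tilde{A}_k|\to\infty$) that the fiber diameter tends to zero using $d_n(A_k)\geq \delta(A_k)/\sqrt{2n}$ and $d_n(A_k)\geq|\det\tilde{A}_k|^{1/n}$ from Lemma \ref{lemfundamental}. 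Finally, a small but real point: Mahler compactness is the wrong tool for the last step, since the base tori may themselves collapse; one needs the statement that any Gromov--Hausdorff limit of flat tori is a flat torus of possibly lower dimension (Proposition \ref{prop9-1}), which is what produces the ``lower dimension'' in the conclusion.
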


If the rank of the sub-Riemannian metric on a Heisenberg manifold $\Gamma_k\backslash H_n$ is $2n$,
then the minimal Popp's volume coincides with the Popp's volume (Proposition 5 in \cite{tas}).
This implies the following corollary.

\begin{corollary}
	Let $\left\{(\Gamma_k\backslash H_n,dist_k)\right\}_{k\in\N}$ be a sequence of compact Heisenberg manifolds endowed with left invariant sub-Riemannian metrics of rank $2n$.
Assume that this sequence converges in the Gromov--Hausdorff topology,
the diameter is uniformly bounded above by $D>0$,
and the total measure with respect to the Popp's volume converges to zero.	
Then the limit space is isometric to a flat torus of lower dimension.
\end{corollary}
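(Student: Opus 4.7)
The plan is to deduce the corollary immediately from Theorem \ref{thmmain} by matching the hypothesis on the Popp's volume to the hypothesis on the minimal Popp's volume. Because the rank-$2n$ case is precisely the case where the sub-bundle $\mathcal{D}$ already satisfies the conditions required to define Popp's volume in the classical sense, there is no genuinely new content to verify; the only thing to check is that the volume condition in the corollary implies the volume condition in the main theorem.

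Concretely, I would proceed as follows. First, I would invoke Proposition~5 of \cite{tas}, quoted in the paragraph just before the corollary: when the rank of the left invariant sub-Riemannian metric on $\Gamma_k\backslash H_n$ is $2n$, the minimal Popp's volume and the Popp's volume coincide as measures on $\Gamma_k\backslash H_n$. In particular they have the same total mass. Next, I would rewrite the hypothesis ``the total measure with respect to the Popp's volume converges to zero'' as ``the total measure with respect to the minimal Popp's volume converges to zero.'' The remaining hypotheses (Gromov--Hausdorff convergence and uniform diameter bound by $D$) are identical in the two statements, so the hypotheses of Theorem \ref{thmmain} are satisfied by the sequence $\{(\Gamma_k\backslash H_n,dist_k)\}_{k\in\N}$. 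Finally, I would conclude by quoting Theorem \ref{thmmain}, which identifies the limit as a flat torus of lower dimension.

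There is essentially no obstacle, since the work has been done in two places: in \cite{tas} to show the coincidence of the two volumes in the rank-$2n$ regime, and in Theorem \ref{thmmain} to handle the collapse statement in the more general minimal Popp setting. The only point that deserves a line of care is that Proposition~5 of \cite{tas} gives equality of the measures themselves (not merely proportionality up to a rank-dependent constant), so that the convergence of total mass to zero transfers without introducing any factor that could distort the limit procedure. Once this is noted, the corollary follows in one line.
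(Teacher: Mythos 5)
Your proposal is correct and follows exactly the paper's own (one-line) derivation: the sentence preceding the corollary invokes Proposition~5 of \cite{tas} to identify the minimal Popp's volume with the Popp's volume in the rank-$2n$ case, after which the corollary is an immediate consequence of Theorem~\ref{thmmain}. Nothing further is needed.
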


We explain the idea of the proof of main theorem.
It is well known that a compact Heisenberg manifold has a fiber bundle structure $S^1\to \Gamma\backslash H_n\to \T^{2n}$.
We show that if a sequence collapses,
then the circle fibers also collapse to a point.
Once we show that the fibers collapse,
then the Gromov--Hausdorff limit is isometric to the limit of the base torus with the quotient distances.
It is also known that a Gromov--Hausdorff limit of tori with flat metrics is isometric to a flat torus (Proposition 3.1 in \cite{bet}).
This concludes the theorem.

In the appendix,
we give the systolic inequality on sub-Riemannian compact Heisenberg manifolds with the Popp's volume.
In Riemannian geometry,
the systolic inequality ensures that the existence of the constant $C$ dependent only on the dimension $n$ such that
$$systole\leq C\times measure^{\frac{1}{n}}.$$
It is well known that the topological dimension of a Riemannian manifold is equal to its Hausdorff dimension.

For sub-Riemannian Heisenberg manifolds with the Popp's volume,
We show that the systolic inequality holds with the the Hausdorff dimension exponent (Theorem \ref{thmapp1}).
Moreover,
we give the equality condition in the $3$-dimensional case (Theorem \ref{thmapp2}).

\section*{Acknowledgement}
The author would thank to Prof. Koji Fujiwara for many helpful comments.
The author would thank to Prof. Ryokichi Tanaka for bringing the problem in the appendix to his attention.
This work was supported by JSPS KAKENHI Grant Number JP20J13261.

\section{Preliminaries from sub-Riemannian geometry}\label{sec0}
In this section we prepare notation on sub-Riemannian geometry.
\subsection{Sub-Riemannian structure}

Let $(M,\mathcal{D},g)$ a sub-Riemannian manifold.
We say that a vector field on $M$ is \textit{horizontal} if it is a section of the sub-bundle $\mathcal{D}$.
\begin{example}\label{exlie}
	Let $G$ be a connected Lie group,
	$\g$ the associated Lie algebra,
	$\mv\subset \g$ a subspace and $\langle\cdot,\cdot\rangle$ an inner product on $\mv$.
	For $x\in G$,
	denote by $L_x:G\to G$ the left translation by $x$.
	Define a sub-Riemannian structure on $G$ by
	$$\mathcal{D}_x=(L_x)_{\ast}\mv,~~~~g_x(u,v)=\langle L_{x\ast}^{-1}u,L_{x\ast}^{-1}v\rangle.$$
	Such a sub-Riemannian structure $(\mathcal{D},g)$ is called left invariant.
	We sometimes write left invariant sub-Riemannian structure by $(\mv,\langle\cdot,\cdot\rangle)$.

\end{example}

The associated distance function is given as follows.
We say that an absolutely continuous path $c:[0,1]\to M$ is \textit{admissible} if $\dot{c}(t)\in \mathcal{D}_{c(t)}$ a.e. $t\in [0,1]$.
We define the length of an admissible path by
$$\ell(c)=\int^1_0\sqrt{g(\dot{c}(t),\dot{c}(t))}dt.$$
For $x,y\in M$,
define the distance function by
$$d(x,y)=\inf\left\{\ell(c)~|~c(0)=x,c(1)=y,\text{$c$ is admissible}\right\}.$$
In general not every pair of points in $M$ is joined by an admissible path.
This implies that the value of the function $d$ may be the infinity.
The following \textit{bracket generating} condition ensures that any two points are joined by an admissible path.

\begin{definition}[Bracket generating distribution]\label{dfnbracket}
	For every $i\in\N$,
	let $\mathcal{D}^i$ be the submodule in $Vec(M)$ inductively defined by
	$$\mathcal{D}^1=\mathcal{D},~~~~\mathcal{D}^{i+1}=\mathcal{D}^i+[\mathcal{D},\mathcal{D}^i],$$
	and set $\mathcal{D}_x^i=\left\{X(x)~|~X\in \mathcal{D}^i\right\}$ for $x\in M$.

	We say that a distribution $\mathcal{D}$ is bracket generating if for all $x\in M$ there is $r=r(x)\in\N$ such that $\mathcal{D}_x^r=T_xM$.
\end{definition}

\begin{theorem}[Chow--Rashevskii's theorem, Theorem 3.31 in \cite{agr}]
	Let $(M,\mathcal{D},g)$ be a sub-Riemannian manifold with a bracket generating distribution.
	Then the following two assertions hold.
	\begin{itemize}
		\item[(1)]$(M,d)$ is a metric space,
		\item[(2)]the topology induced by $(M,d)$ is equivalent to the manifold topology.
		\end{itemize}
		In particular,
		$d:M\times M\to \R$ is continuous.
\end{theorem}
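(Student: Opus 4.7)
The plan is to establish (1) and (2) simultaneously by working locally around an arbitrary point $x_0 \in M$ and then globalizing by connectedness. The essential ingredient is a local controllability statement with quantitative distance bounds, for which the bracket generating condition is tailor made.

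First I would prove local controllability. Fix $x_0$ and a local horizontal frame $X_1,\dots,X_m$ of $\mathcal{D}$. By bracket generating, one can choose iterated brackets of the $X_i$, say $Z_1,\dots,Z_n$ with $n=\dim M$ and $Z_j$ of bracket-depth $k_j \leq r(x_0)$, that span $T_{x_0}M$. The flow-commutator identity
$$\phi^{X_a}_s \circ \phi^{X_b}_s \circ \phi^{X_a}_{-s} \circ \phi^{X_b}_{-s}(x) = x + s^2[X_a,X_b](x) + o(s^2),$$
and its $k$-fold analogues, let me construct a map $\Phi(t_1,\dots,t_n)$ that realizes the direction $Z_j$ by a composition of flows of total time of order $|t_j|^{1/k_j}$ along admissible curves. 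Introducing non-isotropic dilations $\delta_\epsilon$ adapted to the flag $\mathcal{D}^1 \subsetneq \mathcal{D}^2 \subsetneq \cdots$, the rescaled map $\delta_\epsilon^{-1}\circ\Phi\circ\delta_\epsilon$ converges as $\epsilon\to0$ to (essentially) the exponential map of the nilpotent approximation at $x_0$, which is a smooth diffeomorphism. Hence $\Phi$ is open at $0$ and sends a neighborhood of the origin onto a neighborhood of $x_0$; totalling the lengths of the constituent horizontal segments yields the quantitative estimate $d(x_0,y) \leq C|y-x_0|_E^{1/r(x_0)}$ in any smooth chart, where $|\cdot|_E$ is any Euclidean norm on the chart.

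Second I would verify the metric axioms. Symmetry follows from time-reversal of admissible curves; the triangle inequality follows from concatenation after reparametrization; $d(x,x)=0$ is immediate. For finiteness everywhere, declare $x\sim y$ iff $d(x,y)<\infty$: local controllability forces equivalence classes to be open, and connectedness of $M$ leaves only one class. For positivity, pick a small coordinate patch around $x_0$ and extend $g$ to an auxiliary Riemannian metric $g_R$ on the chart. Horizontal vectors satisfy $g \geq c_0^2 \, g_R|_\mathcal{D}$ for some constant, so admissible curves contained in the patch satisfy $\ell(c) \geq c_0|c(1)-c(0)|_E$. Hence $d(x_0,y) \geq c_0|y-x_0|_E$ for $y$ sufficiently close to $x_0$, giving positivity and, combined with the bound from the first step, the sandwich
$$c_0|y-x_0|_E \;\leq\; d(x_0,y) \;\leq\; C|y-x_0|_E^{1/r(x_0)}.$$
This sandwich immediately proves (2), and continuity of $d:M\times M\to\R$ is then standard.

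The main obstacle is the openness of $\Phi$ in the first step: the flow-commutator construction is only Hölder in the variables corresponding to higher brackets, so the ordinary inverse function theorem does not apply to $\Phi$ directly at the origin. The cleanest way around this is the dilation argument indicated above, which reduces openness of $\Phi$ at $0$ to openness of a smooth map (the nilpotent exponential) via a stability-under-perturbation argument. Everything else in the proof is bookkeeping once this step is in place.
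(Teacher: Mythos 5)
Your proposal is a correct proof strategy, but note first that the paper does not prove this statement at all: it is quoted verbatim as Theorem 3.31 of the textbook by Agrachev--Barilari--Boscain, so the only meaningful comparison is with the proof in that cited source. Your route is the Bell\"aiche/ball--box approach: realize bracket directions by commutators of flows, and resolve the resulting H\"older-degeneracy of $\Phi$ at the origin via non-isotropic dilations and convergence to the nilpotent approximation. This is heavier machinery than the cited proof, which sidesteps the openness difficulty entirely by an inductive construction: one composes flows of horizontal vector fields one at a time, using the lemma that a bracket-generating distribution cannot be everywhere tangent to a proper immersed submanifold (otherwise all iterated brackets would remain tangent to it), to increase the rank of the smooth composition-of-flows map until it becomes a submersion at some small nonzero parameter; openness then follows from the ordinary rank theorem, and the topology statement from a local comparison with an auxiliary Riemannian metric exactly as in your second step. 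What your approach buys is strictly more, namely the quantitative H\"older estimate $d(x_0,y)\leq C\,|y-x_0|_E^{1/r}$, which is essentially the ball--box theorem and is not needed for (1) and (2); the price is the delicate part you yourself flag, and to make that part airtight you should additionally require the brackets $Z_1,\dots,Z_n$ to be adapted to the flag $\mathcal{D}^1\subsetneq\mathcal{D}^2\subsetneq\cdots$ (i.e.\ their classes give bases of the graded pieces), since otherwise the dilated limit map can degenerate, and you should make the ``stability under perturbation'' of openness precise via a degree (Brouwer) argument, as openness is not stable under mere uniform perturbation. With those two points supplied, your argument is complete and yields a stronger conclusion than the statement requires.
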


Assume that the metric space $(M,d)$ is proper,
that is every closed ball is compact.
With the help of the Ascoli--Alzera theorem,
we can show the existence of a length minimizing path joining any two points (Theorem 3.43 in \cite{agr}).

\subsection{Length minimizing paths}

Let us study the structure of length minimizing paths on a sub-Riemannian manifold $(M,\mathcal{D},g)$.

For simplicity we assume that the dimension of $\mathcal{D}_x$ is equal to $m$ for all $x\in M$ and that we have a family of globally defined $m$ smooth vector fields $\left\{f_1,\dots,f_m\right\}$ such that $\left\{f_1(x),\dots,f_m(x)\right\}$ is an orthonormal basis of $(\mathcal{D}_x,g_x)$.
We call such a family a \textit{generating family}.

Let $\sigma$ be the canonical symplectic form on $T^{\ast}M$.
For a given function $h:T^{\ast}M\to \R$,
there is a unique vector field $\vec{h}$ on $T^{\ast}M$ defined by
$$\sigma(\cdot,\vec{h}(\lambda))=dh|_{\lambda}~~~~(\lambda\in T^{\ast}M).$$
We call the vector $\vec{h}$ the \textit{Hamiltonian vector field of $h$}.

Let $h_i:T^{\ast}M\to \R$,
$i=1,\dots,m$ be the function defined by
$$h_i(\lambda)=\langle\lambda|f_i(x)\rangle~~~~(\lambda\in T^{\ast}_xM),$$
where $\langle\cdot|\cdot\rangle$ is the canonical pairing of covectors and vectors.
Then length minimizing paths on a sub-Riemannian manifold are explained with the Hamiltonian vector fields of $h_i$'s as follows.

\begin{theorem}[The Pontryagin maximal principle, Theorem 4.20 in \cite{agr}]\label{thmextremal}
	Let $\gamma:[0,T]\to M$ be a length minimizing path parametrized by constant speed,
	and write its differential by
	$$\dot{\gamma}(t)=\sum_{i=1}^m u_i(t)f_i(\gamma(t)).$$
	Then there is a Lipschitz curve $\lambda:[0,T]\to T^{\ast}M$ such that
	\begin{equation}\label{eqlambda}
		\begin{cases}
		\lambda(t)\in T^{\ast}_{\gamma(t)}M,\\
		\dot{\lambda}(t)=\sum_{i=1}^m u_i\vec{h}_i(\lambda(t))~~~~a.e.~t\in[0,T],
	\end{cases}\end{equation}
	and one of the following conditions satisfied:
	\begin{itemize}
		\item[(N)]$h_i(\lambda(t))=u_i(t),~~~~i=1,\dots,m,~~t\in[0,1]$,
		\item[(A)]$h_i(\lambda(t))=0,~~~~i=1,\dots,m$.
		\end{itemize}
\end{theorem}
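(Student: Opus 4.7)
The plan is to reformulate the length-minimizing problem as a constrained optimization problem on a Banach space of controls, and then apply a Lagrange-multiplier argument whose multiplier lifts along the trajectory to produce the dual curve $\lambda(t)$ required by the theorem.

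First I fix $x_0=\gamma(0)$ and introduce the endpoint map $E_{x_0}:L^{\infty}([0,T],\R^m)\to M$ sending a control $u=(u_1,\dots,u_m)$ to the value at time $T$ of the solution of $\dot{\gamma}(t)=\sum_i u_i(t) f_i(\gamma(t))$ with $\gamma(0)=x_0$; this map is smooth on the open set where the solution exists on $[0,T]$. Since $\gamma$ is length minimizing and parametrized at constant speed, the Cauchy--Schwarz inequality implies that the associated control $u$ minimizes the energy functional $J(u)=\tfrac12\int_0^T\sum_i u_i(t)^2\,dt$ subject to the constraint $E_{x_0}(u)=\gamma(T)$. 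The Lagrange multiplier rule then provides $\nu\geq 0$ and $\lambda_T\in T^{\ast}_{\gamma(T)}M$, not both zero, with
\begin{equation*}
\nu\,dJ|_u \;=\; \lambda_T\circ dE_{x_0}|_u.
\end{equation*}

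Next I compute the differential of the endpoint map by the standard first-variation formula for ODEs: $dE_{x_0}|_u(v)=\sum_i\int_0^T v_i(t)\,(P_{t,T})_{\ast}f_i(\gamma(t))\,dt$, where $P_{t,T}$ denotes the flow along $\gamma$ from time $t$ to time $T$. Defining $\lambda(t)=(P_{t,T})^{\ast}\lambda_T\in T^{\ast}_{\gamma(t)}M$ yields a Lipschitz curve which, by construction, satisfies the non-autonomous Hamiltonian equation $\dot{\lambda}(t)=\sum_i u_i(t)\vec{h}_i(\lambda(t))$ almost everywhere; this is precisely \eqref{eqlambda}. Substituting this $\lambda$ into the multiplier identity and using $dJ|_u(v)=\int_0^T\sum_i u_i v_i\,dt$ on one side and $h_i(\lambda(t))=\langle\lambda(t)\,|\,f_i(\gamma(t))\rangle$ on the other, the arbitrariness of $v$ forces $\nu u_i(t)=h_i(\lambda(t))$ almost everywhere. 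A dichotomy on $\nu$ finishes the proof: if $\nu>0$ I rescale $\lambda$ (which preserves \eqref{eqlambda}) to obtain case (N); if $\nu=0$ then $\lambda_T\neq 0$ and the identity forces $h_i(\lambda(t))\equiv 0$, which is case (A).

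The main obstacle is justifying the Lagrange multiplier rule in this infinite-dimensional $L^{\infty}$ setting. Concretely, one must verify that the energy functional and the endpoint map are Fréchet differentiable in a sufficiently strong sense, and that the range of $dE_{x_0}|_u$ is closed in $T_{\gamma(T)}M$; the abnormal alternative (A) is precisely the degenerate case in which $dE_{x_0}|_u$ fails to be surjective, so one cannot collapse the dichotomy by an implicit-function-theorem shortcut. Once the multiplier is secured, the remaining steps are routine: Lipschitz regularity of $\lambda$ follows from the smoothness of the flow of an $L^{\infty}$ time-dependent vector field, and the Hamiltonian form of $\dot{\lambda}$ follows from the intrinsic identification of $(P_{t,T})^{\ast}$ with the flow of $\sum_i u_i\vec{h}_i$ on $T^{\ast}M$.
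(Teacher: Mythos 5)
The paper does not prove this statement: it is quoted verbatim as Theorem 4.20 of the cited reference \cite{agr}, so there is no in-paper proof to compare against. Your outline is, in substance, the standard proof given in that reference (energy reformulation via Cauchy--Schwarz, endpoint map, Lagrange multipliers, pullback of $\lambda_T$ along the flow, and the dichotomy $\nu>0$ versus $\nu=0$), and each step you describe is correct. The one obstacle you flag --- justifying the multiplier rule in the $L^{\infty}$ setting and worrying about closed range of $dE_{x_0}|_u$ --- is resolved more cheaply than you suggest: one applies the argument to the \emph{extended} endpoint map $u\mapsto (J(u),E_{x_0}(u))$ with values in the finite-dimensional manifold $\R\times M$, observes that if its differential at the minimizer were surjective the map would be locally open (contradicting optimality, since one could keep the endpoint fixed while decreasing the energy), and then takes $(-\nu,\lambda_T)\neq 0$ to be any nonzero covector annihilating the image of that differential; closedness of the range is automatic in finite dimensions. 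With that substitution your sketch is a complete and faithful account of the standard argument, modulo the minor point that in case (N) the identity $\nu u_i(t)=h_i(\lambda(t))$ holds a priori only almost everywhere, and one redefines $u$ on a null set using the continuity of $t\mapsto h_i(\lambda(t))$ to get it for all $t$.
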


\begin{definition}[Normal extremal]
	The Lipschitz curve $\lambda$ with the condition (N) is called a normal extremal,
and its projection $\gamma$ is called a normal trajectory.
\end{definition}
\begin{definition}[Abnormal extremal]
	The Lipschitz curve $\lambda$ with the condition (N) is called an abnormal extremal,
	and its projection $\gamma$ is called an abnormal trajectory.
\end{definition}

Let $\mathcal{D}^{\bot}_x\subset T_x^{\ast}M$ be the subspace defined by
$$\mathcal{D}^{\bot}_x=\left\{\lambda\in T_x^{\ast}M\mid \langle\lambda|u\rangle=0~~\text{for all}~u\in \mathcal{D}_x\right\}.$$
With this notation,
we can say that an extremal $\lambda(t)=(x(t),p(t))$ is abnormal if $\lambda(t)\in \mathcal{D}^{\bot}_{x(t)}$.

\begin{example}
	Suppose that $(M,\mathcal{D},g)$ is a Riemannian manifold.
	Then the condition (A) implies that $\lambda(t)\in \mathcal{D}^{\bot}_{x(t)}=\left\{0\right\}$.
Combined with the second equality in (\ref{eqlambda}),
such a Lipschitz curve $\lambda$ is a constant curve.
This argument shows that every abnormal trajectories on a Riemannian manifold is a constant curve.
\end{example}

\begin{remark}\label{rmkabnormalnormal}
	\begin{itemize}
		\item An abnormal extremal and a normal extremal may project to the same trajectory.
	Hence a trajectory may be normal and abnormal simultaneously.

	\item On the Heisenberg group and its quotient spaces,
	there are no non-trivial abnormal extremals.
	Thus we compute only normal extremals in this paper.
\end{itemize}
\end{remark}
	A normal extremal is the solution to the differential equation,
called the Hamiltonian system.
Let $H:T^{\ast}M\to \R$ be the function defined by 
$$H(\lambda)=\frac{1}{2}\sum_{i=1}^mh_i(\lambda)^2.$$
This function is called the \textit{sub-Riemannian Hamiltonian}.

\begin{theorem}[Theorem 4.25 in \cite{agr}]\label{thmnormalgeodesic}
	A Lipschitz curve $\lambda:[0,T]\to T^{\ast}M$ is a normal extremal if and only if it is a solution to the Hamiltonian system
	$$\dot{\lambda}(t)=\vec{H}(\lambda(t)),~~~~t\in[0,T],$$
	where $\vec{H}$ is the Hamiltonian vector field of $H$.

	Moreover,
the corresponding normal trajectory $\gamma$ is smooth and has a constant speed satisfying
$$\frac{1}{2}\|\dot{\gamma}(t)\|^2=H(\lambda(t)).$$
\end{theorem}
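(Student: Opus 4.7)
The plan is to reduce the equivalence to a single symplectic identity, $\vec{H}(\lambda)=\sum_{i=1}^m h_i(\lambda)\,\vec{h}_i(\lambda)$, and then read both implications off of it. Since $dH=\sum_{i=1}^m h_i\,dh_i$ and the correspondence $h\mapsto\vec{h}$ defined by $\sigma(\cdot,\vec{h})=dh$ is $C^\infty(T^{\ast}M)$-linear in the function $h$, this identity is a short pointwise computation and is the only real input from symplectic geometry.

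Given the identity, the forward direction is immediate: under condition (N) the controls $u_i$ of a length minimizer equal $h_i(\lambda)$, so the second line of (\ref{eqlambda}) becomes $\dot\lambda(t)=\sum_i h_i(\lambda(t))\,\vec{h}_i(\lambda(t))=\vec{H}(\lambda(t))$. Conversely, given an integral curve $\lambda$ of $\vec{H}$, I would set $u_i(t):=h_i(\lambda(t))$; the same identity shows that $\lambda$ satisfies (\ref{eqlambda}) with these controls, while condition (N) holds by construction. Projecting to $M$ via $\pi_{\ast}\vec{h}_i=f_i$ produces the admissible trajectory $\dot\gamma(t)=\sum_i h_i(\lambda(t))\,f_i(\gamma(t))$, which confirms that $\lambda$ indeed is the normal extremal lifting $\gamma$.

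For the regularity and the constant-speed assertion, smoothness is essentially free: the assumption of a smooth orthonormal generating family makes $\vec{H}$ a smooth vector field on $T^{\ast}M$, and any absolutely continuous integral curve of a smooth vector field is automatically $C^\infty$, so $\lambda$ and hence $\gamma=\pi\circ\lambda$ is smooth. Since $\{f_i(\gamma(t))\}$ is orthonormal one has $\|\dot\gamma(t)\|^2=\sum_i h_i(\lambda(t))^2=2H(\lambda(t))$, and $H$ is preserved along its own flow since $\tfrac{d}{dt}H(\lambda(t))=dH(\vec{H}(\lambda))=\sigma(\vec{H},\vec{H})=0$ by antisymmetry of $\sigma$. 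The only point requiring mild care, though not a genuine obstacle, is that when $\lambda$ is only Lipschitz the Hamiltonian equation must be interpreted almost everywhere before standard ODE regularity can bootstrap $\lambda$ to a smooth solution; once this is done, the two formulations agree and the moreover part follows at once.
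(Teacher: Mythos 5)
The paper offers no proof of this statement: it is quoted directly from the reference \cite{agr} (Theorem 4.25), so there is no in-paper argument to compare against. Your proof is the standard one from that source — reduce everything to the pointwise identity $\vec{H}(\lambda)=\sum_{i=1}^m h_i(\lambda)\,\vec{h}_i(\lambda)$, read off both implications, then bootstrap regularity and conserve $H$ along its own flow — and it is essentially correct. One slip in the justification: the correspondence $h\mapsto\vec{h}$ is \emph{not} $C^\infty(T^{\ast}M)$-linear; it is only $\R$-linear and obeys the Leibniz rule $\overrightarrow{fg}=f\vec{g}+g\vec{f}$ (replacing $h$ by $fh$ changes $dh$ to $f\,dh+h\,df$). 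Your identity nevertheless holds, either by applying Leibniz to $\tfrac12 h_i^2$ or, more directly, from $dH|_{\lambda}=\sum_i h_i(\lambda)\,dh_i|_{\lambda}$ together with the pointwise nondegeneracy of $\sigma_{\lambda}$. A second, definitional caveat: as the surrounding text sets things up, ``normal extremal'' is introduced via Theorem \ref{thmextremal} for length minimizers, whereas the equivalence you prove (correctly, and as in \cite{agr}) is at the level of extremals, i.e.\ curves satisfying (\ref{eqlambda}) with condition (N); an arbitrary integral curve of $\vec{H}$ need not project to a global minimizer, only to a locally minimizing trajectory, which is a separate theorem. The remaining points — interpreting the ODE almost everywhere, upgrading a Lipschitz solution of a smooth autonomous ODE to a $C^\infty$ one via $\lambda(t)=\lambda(0)+\int_0^t\vec{H}(\lambda(s))\,ds$, the computation $\|\dot{\gamma}\|^2=\sum_i h_i(\lambda)^2=2H(\lambda)$ from orthonormality of the $f_i$, and $\tfrac{d}{dt}H(\lambda(t))=\sigma(\vec{H},\vec{H})=0$ — are all sound.
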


A straightforward computation shows the local expression for $\lambda(t)=(x(t),p(t))$ written by
\begin{equation}\label{hamiltonianform}
	\begin{cases}
	\dot{x}(t)=\frac{\partial H}{\partial p},\\
	\dot{p}(t)=-\frac{\partial H}{\partial x}.
\end{cases}\end{equation}

\subsection{The Popp's volume}\label{secpopps}

On a Riemannian manifold,
one has a canonical Riemannian volume form defined by
$$v=\nu_1\wedge\cdots\wedge\nu_n,$$
where $\{\nu_1,\cdots\nu_n\}$ is a dual coframe of an orthonormal basis.
In sub-Riemannian geometry,
we also have a canonical volume form,
called \textit{Popp's volume} introduced in \cite{mon}.
The Popp's volume is defined under the following \textit{equiregular} assumption.
\begin{definition}[Equiregular distribution]
	A sub-Riemannian manifold $(M,\mathcal{D},g)$ is equiregular if for any $i\in N$ the dimension of the subspaces $\mathcal{D}_x^i$ is independent of the choice of $x\in M$.
\end{definition}
	If $\mathcal{D}_x^r=T_xM$,
	we say that a sub-Riemannian manifold is $r$-step.
For simplicity,
we recall the definition of the Popp's volume in the $2$-step case.

\begin{definition}[Nilpotentization]
	The nilpotentization of $\mathcal{D}$ at the point $x\in M$ is the graded vector space
	$$gr_x(\mathcal{D})=\mathcal{D}_x\oplus \mathcal{D}_x^2/\mathcal{D}_x.$$
\end{definition}

On the vector space $gr_x(\mathcal{D})$ we can define a new Lie bracket $[\cdot,\cdot]'$ by
$$[X~mod~\mathcal{D},Y~mod~\mathcal{D}]_x^{\prime}=[X,Y]_x~\mod~\mathcal{D}_x.$$
The new Lie bracket rule induces a different Lie algebra structure from the original one.

From the inner product on $\mathcal{D}_x$,
we obtain the inner product on the nilpotentization $gr_x(\mathcal{D})$ of $\mathcal{D}$.
Let $\pi:\mathcal{D}_x\otimes \mathcal{D}_x\to \mathcal{D}_x^2/\mathcal{D}_x$ be the linear map given by
$$\pi(u\otimes v)=[U,V]_x~~mod ~~\mathcal{D}_x,$$
where $U,V$ are horizontal extensions of $u,v$.
Define the norm $\|\cdot\|_2$ on $\mathcal{D}^2_x/\mathcal{D}_x$ by
$$\|z\|_2=\min\left\{\|U(x)\|\|V(x)\|~|~[U,V]_x=z ~mod~\mathcal{D},~U,V:~\text{horizontal vector fields} \right\}.$$
This norm satisfies the parallelogram law,
thus we obtain the inner product $\langle\cdot,\cdot\rangle_2$ on $\mathcal{D}_x^2/\mathcal{D}_x$.
The direct sum of two inner product spaces $(\mathcal{D}_x,g_x)$ and $(\mathcal{D}^2_x/\mathcal{D}_x,\langle\cdot,\cdot\rangle_2)$ gives the new inner product $\langle\cdot,\cdot\rangle_x^{\prime}$ on the nilpotentization $gr_x(\mathcal{D})$.

Let $\omega_x\in \wedge^ngr_x(\mathcal{D})^{\ast}$ be the volume form obtained by wedging the elements of orthonormal dual basis in $(gr_x(\mathcal{D}),\langle\cdot,\cdot\rangle_x^{\prime})$.
It is defined up to sign.
By the following lemma,
the volume $\omega_x\in \wedge^ngr_x(\mathcal{D})^{\ast}$ is transported to the volume on $\wedge^nT_x^{\ast}M$.

\begin{lemma}[Lemma 10.4 in \cite{mon}]\label{lemfilt}
	Let $E$ be a vector space of dimension $n$ with a filtration by linear subspaces $F_1\subset F_2\subset\cdots F_l=E$.
	Let $Gr(F)=F_1\oplus F_2/F_1\oplus\cdots F_l/F_{l-1}$ be the associated graded vector space.
	Then there is a canonical isomorphism $\theta:\wedge^n E^{\ast}\simeq \wedge^n gr(F)^{\ast}$.
\end{lemma}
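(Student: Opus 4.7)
The plan is to exploit the fact that for any short exact sequence $0\to A\to B\to C\to 0$ of finite-dimensional vector spaces there is a canonical isomorphism $\wedge^{\dim B}B\simeq \wedge^{\dim A}A\otimes \wedge^{\dim C}C$, where the map sends $(a_1\wedge\cdots\wedge a_p)\otimes(\overline{b}_1\wedge\cdots\wedge\overline{b}_q)$ to $a_1\wedge\cdots\wedge a_p\wedge b_1\wedge\cdots\wedge b_q$ (with $b_j$ any lifts of $\overline{b}_j$); this is well defined because the exterior product of two elements in $A$ with $\dim A$ elements already in $A$ vanishes. Both $\wedge^n E^{\ast}$ and $\wedge^n gr(F)^{\ast}$ are one-dimensional, so producing a canonical isomorphism on the duals is equivalent to producing one on the top exterior powers of $E$ and $gr(F)$ themselves.

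First I would apply the lemma just stated to each of the filtration sequences $0\to F_{i-1}\to F_i\to F_i/F_{i-1}\to 0$ and iterate to obtain a canonical isomorphism
$$\wedge^n E=\wedge^{\dim F_l}F_l\;\simeq\; \bigotimes_{i=1}^l \wedge^{\dim(F_i/F_{i-1})}(F_i/F_{i-1}).$$
Next, since $gr(F)=\bigoplus_{i=1}^l F_i/F_{i-1}$ is an (abstract) direct sum of vector spaces, the top exterior power splits canonically as
$$\wedge^n gr(F)\;\simeq\; \bigotimes_{i=1}^l \wedge^{\dim(F_i/F_{i-1})}(F_i/F_{i-1}).$$
Composing these two isomorphisms yields a canonical isomorphism $\wedge^n E\simeq \wedge^n gr(F)$, and dualizing gives the desired $\theta$.

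To make the construction concrete and amenable to use later in the paper (where one needs to transport the Popp volume form on $gr_x(\mathcal{D})$ back to $T_xM$), I would also spell it out in an adapted basis: pick $e_1,\ldots,e_n$ ordered so that $e_1,\ldots,e_{\dim F_i}$ spans $F_i$ for each $i$, and declare $\theta$ to send the covector dual to $e_1\wedge\cdots\wedge e_n$ to the covector dual to $\overline{e}_1\wedge\cdots\wedge \overline{e}_n$ in $gr(F)$. Invariance of $\theta$ under the change of adapted basis reduces to the observation that the change-of-basis matrix from one adapted basis to another is upper block-triangular with respect to the filtration, so its determinant equals the product of the determinants of the diagonal blocks, which is exactly the Jacobian of the corresponding change on $gr(F)$.

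The main obstacle is not the existence of $\theta$ but verifying that the two descriptions agree and that the iterated application of the short exact sequence isomorphism is truly independent of the order in which the steps are combined; this amounts to a careful bookkeeping of signs coming from the permutation that regroups wedge factors by filtration level. Once those signs are shown to cancel consistently (equivalently, once the adapted-basis description is shown to be independent of the adapted basis), canonicity and the isomorphism property are immediate from the one-dimensionality of both sides.
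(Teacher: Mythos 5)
Your proof is correct. The paper itself offers no proof of this statement --- it is imported verbatim as Lemma 10.4 of \cite{mon} --- and your argument (the canonical isomorphism $\wedge^{\dim B}B\simeq\wedge^{\dim A}A\otimes\wedge^{\dim C}C$ for a short exact sequence, iterated over the filtration and matched against the top exterior power of the direct sum, with the block-triangular change-of-adapted-basis check confirming canonicity) is exactly the standard argument behind the cited lemma, so there is nothing to reconcile.
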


Let $\theta:\wedge^n T_x^{\ast}M\to \wedge^ngr_x(\mathcal{D})^{\ast}$ be the isomorphism obtained by Lemma \ref{lemfilt}.

\begin{definition}[Popp's volume]\label{dfnpopp}
	The Popp's volume form $vol(\mathcal{D},g)$ is defined by
	$$vol(\mathcal{D},g)_x=\theta^{\ast}\omega_x,~~~~x\in M.$$
\end{definition}

\noindent Trivially the Popp's volume of a Riemannian manifold is the canonical Riemannian volume form.

The Popp's volume has a useful expression by using the structure constant.
We say that a local frame $X_1,\dots,X_n$ is adapted if $X_1,\dots,X_m$ are orthonormal.
Define the smooth functions $c_{ij}^l$ on $M$ by
$$[X_i,X_j]=\sum_{l=1}^nc_{ij}^lX_l.$$
We call them the \textit{structure constants}.
We define the $n-m$ dimensional square matrix $B$ by
$$B_{hl}=\sum_{i,j=1}^m c_{ij}^h c_{ij}^l.$$

\begin{theorem}[Theorem 1 in \cite{barriz}]\label{thmpopp}
	Let $X_1,\dots,X_n$ be a local adapted frame,
	and $\nu^1,\dots,\nu^n$ the dual coframe.
	Then the Popp's volume $vol(\mathcal{D},g)$ is written by
	$$vol(\mathcal{D},g)=\left(\det B\right)^{-\frac{1}{2}}\nu^1\wedge\cdots\wedge \nu^n.$$
\end{theorem}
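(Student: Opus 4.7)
The plan is to unwind Definition \ref{dfnpopp} on the given adapted frame, reducing the theorem to a short linear-algebra computation of the quotient inner product on $\mathcal{D}_x^2/\mathcal{D}_x$ in terms of the structure constants $c_{ij}^l$.

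Fix $x\in M$ and, for $h=m+1,\dots,n$, set $\bar X_h := X_h(x) \mod \mathcal{D}_x \in \mathcal{D}_x^2/\mathcal{D}_x$. Because $X_1,\dots,X_n$ is adapted, $\{X_1(x),\dots,X_m(x)\}$ is an orthonormal basis of $(\mathcal{D}_x,g_x)$ and $\{\bar X_{m+1},\dots,\bar X_n\}$ is a basis of $\mathcal{D}_x^2/\mathcal{D}_x$. The goal is to compute the Gram matrix $G$ of $\langle\cdot,\cdot\rangle_2$ in the latter basis; once this is done, everything else falls out of standard volume-form bookkeeping.

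The main step (and the chief technical obstacle) is to rewrite the "minimum" definition of $\|\cdot\|_2$ as a Hilbert-space quotient. Equip $\mathcal{D}_x\otimes\mathcal{D}_x$ with the tensor inner product making $\{X_i(x)\otimes X_j(x)\}_{i,j\leq m}$ orthonormal. The characterization in the paper says exactly that $\langle\cdot,\cdot\rangle_2$ is the inner product on the quotient $\mathcal{D}_x^2/\mathcal{D}_x$ under the surjection $\pi$, hence $\pi$ restricted to $(\ker\pi)^{\perp}$ is an isometry and in particular $\pi\pi^{\ast}=\mathrm{id}$ on $\mathcal{D}_x^2/\mathcal{D}_x$. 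In the bases above, the relation $[X_i,X_j]=\sum_l c_{ij}^l X_l$ gives $\pi(X_i\otimes X_j)=\sum_{h>m} c_{ij}^h\,\bar X_h$, so $\pi$ is represented by the matrix $C$ with entries $C_{h,(ij)}=c_{ij}^h$. Writing out $\pi\pi^{\ast}=\mathrm{id}$ in coordinates yields $C C^{T}\,G = I$, and the entry
$$(CC^T)_{hl} = \sum_{i,j=1}^m c_{ij}^h c_{ij}^l = B_{hl}$$
identifies $CC^T=B$, hence $G=B^{-1}$.

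With this in hand, the volume form on the inner product space $(gr_x(\mathcal{D}),\langle\cdot,\cdot\rangle_x')$ in the basis $\{X_i(x)\}_{i\leq m}\cup\{\bar X_h\}_{h>m}$ is the product of the two block volumes, namely
$$\omega_x \;=\; (\det B)^{-1/2}\; X_1^{\ast}\wedge\cdots\wedge X_m^{\ast}\wedge \bar X_{m+1}^{\ast}\wedge\cdots\wedge \bar X_n^{\ast},$$
since $\{X_i(x)\}$ is orthonormal and the Gram matrix of $\{\bar X_h\}$ has determinant $\det B^{-1}$. Finally I invoke Lemma \ref{lemfilt}: the canonical isomorphism $\theta:\wedge^n T_x^{\ast}M \to \wedge^n gr_x(\mathcal{D})^{\ast}$ is built by passing to the associated graded of the filtration $\mathcal{D}_x\subset T_xM$, and when evaluated on an adapted frame it sends $\nu^1\wedge\cdots\wedge\nu^n$ to $X_1^{\ast}\wedge\cdots\wedge X_m^{\ast}\wedge \bar X_{m+1}^{\ast}\wedge\cdots\wedge \bar X_n^{\ast}$. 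Pulling $\omega_x$ back via Definition \ref{dfnpopp} therefore gives $vol(\mathcal{D},g)_x=(\det B)^{-1/2}\nu^1\wedge\cdots\wedge\nu^n$, proving the formula. The only delicate point in the argument is the quotient-inner-product computation $G=B^{-1}$; once that is isolated the result is purely formal.
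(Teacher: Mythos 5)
The paper itself gives no proof of this statement --- it is quoted directly from Barilari--Rizzi as Theorem 1 of \cite{barriz} --- so there is no in-paper argument to compare against. Your proof is correct and is essentially the standard derivation from that reference: represent $\pi$ by the matrix $C$ with $C_{h,(ij)}=c_{ij}^h$, identify the Gram matrix of $\langle\cdot,\cdot\rangle_2$ in the basis $\{X_h(x)\bmod\mathcal{D}_x\}_{h>m}$ as $G=(CC^T)^{-1}=B^{-1}$ via the least-norm preimage (your relation $CC^T G=I$), multiply the two block volume factors to get the $(\det B)^{-1/2}$ coefficient, and transport through $\theta$, which on an adapted coframe acts as the obvious identification. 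The one point worth flagging is definitional: the paper's literal $\|z\|_2$ is a minimum of $\|U(x)\|\,\|V(x)\|$ over \emph{decomposable} representatives $[U,V]_x=z\bmod\mathcal{D}_x$, whereas your computation uses the Hilbert-space quotient norm, i.e.\ the minimum of $\|w\|$ over \emph{all} preimages $w\in\mathcal{D}_x\otimes\mathcal{D}_x$; these coincide only when the minimal-norm preimage can be taken decomposable. The quotient-norm reading is the definition actually used in Barilari--Rizzi and is the only one under which the asserted parallelogram law is automatic, so your interpretation is the intended one and the argument is sound.
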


\subsection{The minimal Popp's volume form on Lie groups}\label{secmin}
Let $G$ be a connected Lie group,
$\g$ its Lie algebra,
and  $(\mv,\langle\dot,\cdot\rangle)$ a left invariant sub-Riemannian structure on $G$.
Moreover let $\mathcal{F}(\mv)$ be the set of bracket generating subspaces in $\mv$.
We define the minimal Popp's volume on $(G,\mv,\langle\cdot,\cdot\rangle)$ as follows.

\begin{definition}\label{dfnminvol}
	The minimal Popp's volume is defined by
	$$mvol(\mv,\langle\cdot,\cdot\rangle)=\min\{vol(\mw,\langle\cdot,\cdot\rangle|_{\mw\otimes\mw})\mid \mw\subset\mathcal{F}(\mv)\}.$$
\end{definition}
Here the order of volume forms is obtained from the coefficients of a fixed Haar volume $vol_0$.
Hence we can take the infimum up to sign.
The existence of the minimum is shown as follows.
For a positive constant $C>0$,
define a closed subset $\mathcal{F}(\mv,C)\subset\mathcal{F}(\mv)$ by
$$\mathcal{F}(\mv,C)=\{\mw\subset\mathcal{F}(\mv)\mid |vol(\mw,\langle\cdot,\cdot\rangle|_{\mw\otimes\mw})|\leq |Cvol_0|\}.$$
From its definition,
$\mathcal{F}(\mv,C)$ is a closed subset of a finite union of the Grassmannians,
thus the minimum exists from its compactness.

\section{Compact Heisenberg manifolds}

In this section,
we recall fundamental properties on compact Heisenberg manifolds.

\subsection{The moduli space of compact Heisenberg manifolds}

In this section,
we recall a useful parametrization of the isometry classes of left invariant sub-Riemannian metrics on a compacgt Heisenberg manifold $\Gamma\backslash H_n$.

\begin{definition}
	For a lattice $\Gamma<H_n$ and $k\in\N$,
	we denote by $\mathcal{M}_k(\Gamma\backslash H_n)$ the set of isometry classes of left invariant sub-Riemannian metrics of corank $k$ on $\Gamma\backslash H_n$.

	We call the union $\mathcal{M}(\Gamma\backslash H_n):=\bigcup_{k\in\N}\mathcal{M}_k(\Gamma\backslash H_n)$ the moduli space of left invariant sub-Riemannian metrics on $\Gamma\backslash H_n$.
\end{definition}

To obtain an explicit form of the moduli space,
we recall the classification of lattices in $H_n$.
Fix a basis $\{X_1,\dots,X_{2n},Z\}$ of $\h_n$ such that $[X_i,X_{i+n}]=Z$ and the other brackets are zero.
Let $D_n$ be the set of $n$-tuples of integers $\bm{r}=(r_1,\dots,r_n)$ such that $r_i$ divides $r_{i+1}$ for all $i=1,\dots,n$.
For $\bm{r}\in D_n$,
let $\Gamma_{\bm{r}}< H_n$ be the subgroup defined by
$$\Gamma_{\bm{r}}=\langle\exp(r_1X_1),\dots,\exp(r_nX_n),\exp(X_{n+1}),\dots,\exp(X_{2n}),\exp(Z)\rangle.$$
This gives a characterization of lattices in the Heisenberg Lie group.

\begin{theorem}[Theorem 2.4 in \cite{gor2}]\label{thm2-2.1}
	Any uniform lattice $\Gamma<H_n$ is isomorphic to $\Gamma_{\bm{r}}$ for some $\bm{r}\in D_n$.

Moreover,
$\Gamma_{\bm{r}}$ is isomorphic to $\Gamma_{\bm{s}}$ if and only if $\bm{r}=\bm{s}$.
\end{theorem}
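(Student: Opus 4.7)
The plan is to reduce the classification of uniform lattices in $H_n$ to the classical classification of nondegenerate alternating bilinear forms over $\Z$, using the commutator structure dictated by $[X_i, X_{i+n}]=Z$. First I would exploit the central extension $Z(H_n) \hookrightarrow H_n \twoheadrightarrow H_n/Z(H_n) \cong \R^{2n}$. By Mal'cev's theorem on lattices in connected nilpotent Lie groups, for any uniform lattice $\Gamma<H_n$ the intersection $Z_\Gamma := \Gamma \cap Z(H_n)$ is a uniform lattice in $Z(H_n) \cong \R$, hence infinite cyclic, and the image $L:=\pi(\Gamma)$ under the projection $\pi\colon H_n \to H_n/Z(H_n)$ is a lattice in $\R^{2n}$. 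I would fix a generator $\zeta \in Z_\Gamma$ and identify $Z_\Gamma \cong \Z$.

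Next, using the Baker--Campbell--Hausdorff formula, which truncates in this $2$-step nilpotent setting, one checks the identity $[\exp(v+sZ),\exp(w+tZ)] = \exp(\omega(v,w)Z)$, where $\omega$ is the standard symplectic form on $\R^{2n}$ determined by the bracket rule $[X_i, X_{i+n}]=Z$. Since commutators of elements of $\Gamma$ lie in $Z_\Gamma$, the restriction $\omega|_{L \times L}$ takes values in $\Z\cdot \zeta \cong \Z$. Applying the classical Frobenius theorem on nondegenerate alternating bilinear forms over $\Z$, I obtain a basis $y_1,\dots,y_{2n}$ of $L$ and positive integers $r_1 \mid r_2 \mid \cdots \mid r_n$ such that $\omega(y_i, y_{i+n})=r_i$ and all other basis pairings vanish. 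Choosing lifts $\gamma_i \in \Gamma$ of $y_i$, the BCH identity then gives $[\gamma_i,\gamma_{i+n}]=\zeta^{r_i}$ with other commutators trivial, and matching these with the standard generators produces the desired isomorphism $\Gamma \cong \Gamma_{\bm r}$.

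For the uniqueness half, I would observe that the centre $Z(\Gamma)$ and the commutator pairing $\Gamma/Z(\Gamma) \times \Gamma/Z(\Gamma) \to Z(\Gamma)$ are canonical invariants of the abstract group. An abstract isomorphism $\Gamma_{\bm r}\cong \Gamma_{\bm s}$ therefore induces an isomorphism of skew-symmetric $\Z$-valued forms on $\Z^{2n}$, up to an overall sign coming from the two choices of generator of the centre, and uniqueness of elementary divisors then forces $\bm r=\bm s$. The main subtlety lies in the lifting bookkeeping in the existence step: one must adjust the lifts $\gamma_i$ by elements of $Z_\Gamma$ so that the commutator relations hold in exactly the form required by the definition of $\Gamma_{\bm r}$ rather than with some extra coboundary twist, and one must verify that the final relations involve a single common central generator $\zeta$ across all pairs. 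Because $Z_\Gamma$ is central, this adjustment does not disturb the projection to $L$, so it is essentially routine once the correct symplectic basis of $L$ has been pinned down.
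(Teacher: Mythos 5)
The paper offers no proof of this statement: it is quoted directly from Gordon--Wilson (Theorem 2.4 in \cite{gor2}), so there is no in-paper argument to compare against. Your proposal is a correct reconstruction of the standard (and essentially the cited) argument: intersect $\Gamma$ with the centre, project to a lattice $L$ in $\R^{2n}$, observe that the commutator induces a nondegenerate $\Z$-valued alternating form on $L$, apply the Frobenius normal form to get $r_1\mid\cdots\mid r_n$, and deduce uniqueness from the invariance of these elementary divisors under isomorphisms of the pair $(\Gamma/Z(\Gamma),\,Z(\Gamma))$. One small correction to your closing remark: the ``lifting bookkeeping'' is even more harmless than you suggest, since modifying the lifts $\gamma_i$ by central elements changes no commutator $[\gamma_i,\gamma_j]$; the substantive point is rather that a central extension of the free abelian group $L\cong\Z^{2n}$ by $\Z$ is determined up to isomorphism by its commutator pairing (any symmetric part of the cocycle is a coboundary because $L$ is free abelian), and that is what lets you pass from the matched relations to an actual isomorphism $\Gamma\cong\Gamma_{\bm{r}}$.
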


Fix a lattice $\Gamma_{\bm{r}}$,
and consider the moduli space $\mathcal{M}(\Gamma_{|\bm{r}}\backslash H_n)$.
Let $G_{\bm{r}}$ be the set of matrices in $GL_{2n}(\R)$ given by
$$G_{\bm{r}}=diag(\bm{r})GL_{2n}(\Z)diag(\bm{r})^{-1},$$
where $diag(\bm{r})$ is a diagonal matrix with its diagonal entries $r_1,\dots,r_n,1,\dots,1$.
Let $J_n$ be the skew-symmetric matrix given by
$$J_n=\begin{pmatrix}
	O & I_n\\
	-I_n & O
\end{pmatrix},$$
where $I_n$ is the identity matrix.
We embed the group
$$\widetilde{Sp}(2n,\R)=\left\{\beta\in GL_{2n}(\R)\mid \beta J_n
	\beta=\epsilon(\beta)J_n
,~\epsilon(\beta)=\pm 1 \right\}$$
into $GL_{2n+1}(\R)$ via the mapping $\iota:\beta\mapsto \begin{pmatrix}
	\beta & 0\\
	0 & \epsilon(\beta)
\end{pmatrix}$.

Define the set of matrices $\Pi_{\bm{r}}\subset GL_{2n+1}(\R)$ by
$$\Pi_{\bm{r}}=\iota(G_{\bm{r}}\cap \widetilde{Sp}(2n,\R)).$$
Moreover let
$$\mathcal{A}=\left\{A=\begin{pmatrix}
	\tilde{A} & 0\\
	0 & \rho_A
\end{pmatrix}~\Bigg{|}~\tilde{A}\in GL_{2n}(\R),\rho_A\in\R\right\},$$
and
$$\mathcal{R}=\left\{R=\begin{pmatrix}
	\tilde{R} & 0\\
	0 & \pm 1
\end{pmatrix}~\Bigg{|}~\tilde{R}\in O(2n)\right\}.$$
With these notations,
we can parametrize $\mathcal{M}(\Gamma_{\bm{r}}\backslash H_n)$ as follows.

\begin{definition}[Theorem 3.6 in \cite{tas}]\label{dfn2-3}
	Isometry classes of a compact Heisenberg manifold $\Gamma_{\bm{r}}\backslash H_n$ with left invariant sub-Riemannian metrics of various corank are parametrized by
	$$\Pi_{\bm{r}}\backslash \mathcal{A}/\mathcal{R}.$$
\end{definition}

This identification is given as follows.
Let $\mv_0=Span\{X_1,\dots,X_{2n}\}\subset\h_n$.
Identify a matrix $A=\begin{pmatrix}
	\tilde{A} & 0\\
	0 & \rho_A\end{pmatrix}\in \mathcal{A}$ to a linear endomorphism on $\h_n$ in the basis $\left\{X_1,\dots,X_{2n},Z\right\}$.
Then the image of the matrix $A$ is $\h_n$ (resp. $\mv_0$) if $\rho_A\neq 0$ (resp. $\rho_A=0$).
When $\rho_A\neq 0$ (resp. $\rho_A=0$),
define the inner product $\langle\cdot,\cdot\rangle_A$ on $\h_n$ (resp. $\mv_0$) such that its orthonormal basis is $\{AX_1,\dots,AZ\}$ (resp. $\{AX_1,\dots,AX_{2n}\}$).
In both cases,
a matrix $A\in\mathcal{A}$ defines a left invariant (sub-)Riemannian metric on a compact Heisenberg manifold $\Gamma_{\bm{r}}\backslash H_n$.
On the other hand,
any left invariant sub-Riemannian metric is isometric to such a metric.
The induced metric spaces are isometric if and only if the matrices have the same representatives in $\Pi_{\bm{r}}\backslash \mathcal{A}\mathcal{R}$.

\subsection{The operator $j$}
We recall the operator $j:\mathcal{A}\to End(\mv_0)$,
which has an important role in the study of nilpotent Lie groups.

Let $Z^{\ast}\in\h_n^{\ast}$ be the dual covector of the vector $Z\in[\h_n,\h_n]\subset\h_n$.
For a matrix $A\in\mathcal{A}$,
define a skew symmetric operator $j(A):\mv_0\to \mv_0$ by
$$\langle j(A)(X),Y\rangle_A=Z^{\ast}([X,Y]).$$
The operator $j(A)$ has the following matrix representation.

\begin{lemma}[Lemma 4 in \cite{tas}]
	The operator $j(A)$ has a matrix representation
	$$j(A)=\hspace{1pt}^{t}\hspace{-2pt}\tilde{A}J_n\tilde{A}$$
	in the basis $\left\{AX_1,\dots,AX_{2n}\right\}$.
\end{lemma}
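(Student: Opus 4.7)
The plan is to verify the identity entry by entry by unfolding the definitions in the standard basis. The key observation is that the skew-symmetric bilinear form $\omega(X,Y) := Z^{\ast}([X,Y])$ on $\mathfrak{v}_0$ is represented by the matrix $J_n$ in the basis $\{X_1,\dots,X_{2n}\}$, since the bracket relations $[X_i,X_{i+n}]=Z$ (and the vanishing of all other brackets among the $X_k$) are exactly the defining relations for $J_n$. The effect of the change of basis to $\{AX_1,\dots,AX_{2n}\}$ then inserts $\tilde{A}$ on the right and ${}^t\tilde{A}$ on the left in the standard way a bilinear form transforms.

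First I would write $e_i := AX_i = \sum_k \tilde{A}_{ki}X_k$ and note that by construction $\{e_1,\dots,e_{2n}\}$ is an orthonormal basis of $(\mathfrak{v}_0,\langle\cdot,\cdot\rangle_A)$. Let $M$ denote the matrix of $j(A)$ in this basis. Using orthonormality,
$$M_{ij} \;=\; \langle j(A) e_j,\,e_i\rangle_A \;=\; Z^{\ast}\bigl([e_j,e_i]\bigr),$$
where the second equality is simply the definition of $j(A)$.

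Next I would expand by bilinearity of the bracket:
$$Z^{\ast}([e_j,e_i]) \;=\; \sum_{k,l}\tilde{A}_{kj}\tilde{A}_{li}\,Z^{\ast}([X_k,X_l]) \;=\; \sum_{k,l}\tilde{A}_{kj}\,(J_n)_{kl}\,\tilde{A}_{li} \;=\; \bigl({}^t\tilde{A}\,J_n\,\tilde{A}\bigr)_{ji},$$
and invoke the skew-symmetry of $j(A)$ (automatic from the definition, since $Z^{\ast}([X,Y])$ is skew in $X,Y$) together with the skew-symmetry of ${}^t\tilde{A}J_n\tilde{A}$ (inherited from $J_n$) to conclude that $M = {}^t\tilde{A}\,J_n\,\tilde{A}$, which is the claimed matrix representation.

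There is no serious obstacle here; the computation is essentially formal. The only care needed is bookkeeping with the matrix convention (column versus row vector, and which index of $\tilde{A}_{ij}$ is the row) and keeping track of the single sign flip coming from swapping the two arguments of $[\cdot,\cdot]$ in the passage between $\langle j(A)e_j,e_i\rangle_A$ and the $(i,j)$ entry of $M$, which is absorbed by skew-symmetry.
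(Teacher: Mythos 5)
Your computation is correct and is the standard one: the bilinear form $Z^{\ast}([\cdot,\cdot])$ has matrix $J_n$ in the basis $\{X_1,\dots,X_{2n}\}$ because of the relations $[X_i,X_{i+n}]=Z$, and since $\{AX_1,\dots,AX_{2n}\}$ is by construction orthonormal for $\langle\cdot,\cdot\rangle_A$, the matrix of the operator $j(A)$ in that basis is the change-of-basis transform $^{t}\tilde{A}J_n\tilde{A}$ of the form. Note that the paper itself gives no proof here --- the lemma is imported verbatim as Lemma 4 of \cite{tas} --- so there is no argument to compare against; your proposal supplies the missing verification. The one point I would tighten is your closing remark that the sign flip from swapping the arguments is ``absorbed by skew-symmetry'': it is not absorbed, it genuinely toggles between $M={}^{t}\tilde{A}J_n\tilde{A}$ and $M=-{}^{t}\tilde{A}J_n\tilde{A}$ depending on whether one takes $M_{ij}=\langle j(A)e_i,e_j\rangle_A$ or $M_{ij}=\langle j(A)e_j,e_i\rangle_A$; this is a harmless convention choice (only the quantities $d_i(A)$, $\delta(A)$ and $|\det\tilde{A}|$ extracted from this matrix are used later, and all are insensitive to an overall sign), but it should be stated as a convention rather than argued away.
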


	It is a fundamental fact on skew-symmetric matrices that by mutiplicating an appropriate orthogonal matrix from the right of $\tilde{A}$,
	we can let the matrix representation
	\begin{equation}\label{eq3-0}
		\hspace{1pt}^{t}\hspace{-2pt}\tilde{A}J_n\tilde{A}=\begin{pmatrix}O & diag(d_1(A),\dots,d_n(A))\\
			-diag(d_1(A),\dots,d_n(A)) & O
		\end{pmatrix}.
	\end{equation}
	Here $d_i(A)$'s are positive numbers such that $\pm\sqrt{-1}d_1,\dots,\pm\sqrt{-1}d_n$ are the eigenvalues of $\hspace{1pt}^{t}\hspace{-2pt}\tilde{A}J_n\tilde{A}$.

	\begin{definition}\label{dfn3-1}
		We call a matrix $A\in\mathcal{A}$ canonical form if it satisfies the equation (\ref{eq3-0}).
	\end{definition}

We will assume $d_1\leq\cdots\leq d_n$.
It is easy to see that the functions $d_i$'s are well posed on the moduli space $\mathcal{M}(\Gamma_{\bm{r}}\backslash H_n)$,
please see Lemma 5 in \cite{tas}.

The positive number $d_n$ can be thought as the $\ell^{\infty}$-norm of the matrix $\hspace{1pt}^{t}\hspace{-2pt}\tilde{A}J_n\tilde{A}$ as an element in the Euclidean space $\R^{4n^2}$.
We also mension its $\ell^2$-norm,
the Hilbert--Schmidt norm of matrices.

\begin{definition}\label{dfn3-2}
	For a matrix $A\in\mathcal{A}$,
	we define $\delta(A)=\|\hspace{1pt}^{t}\hspace{-2pt}\tilde{A}J_n\tilde{A}\|_{HS}$.
\end{definition}

The function $\delta$ is also well posed on the moduli space $\mathcal{M}(\Gamma_{\bm{r}}\backslash H_n)$,
please see Lemma 6 in \cite{tas}.
The following lemma is useful for later calculations.

\begin{lemma}[Lemma 11 in \cite{tas}]\label{lemfundamental}
	For a matrix $A\in\mathcal{A}$,
	we have
	\begin{itemize}
		\item[(1)]$\delta(A)=\sqrt{2\sum_{i=1}^nd_i(A)^2}$,
		\item[(2)]$|\det(\tilde{A})|=\prod_{i=1}^nd_i(A)$.
		\end{itemize} 
\end{lemma}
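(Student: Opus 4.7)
The plan is to reduce both statements to standard identities for a real skew-symmetric matrix whose eigenvalues are known, applied to $M := {}^{t}\tilde{A} J_n \tilde{A}$, which is skew-symmetric by construction. By hypothesis the eigenvalues of $M$ are $\pm\sqrt{-1}\,d_1(A),\dots,\pm\sqrt{-1}\,d_n(A)$ (with the $d_i(A)$'s positive). I will not need to pass to the canonical form of Definition \ref{dfn3-1}; it is enough to use traces and determinants, which are basis-invariant spectral data.

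For part (1): since $M$ is skew-symmetric, ${}^{t}M = -M$, and hence
$$\|M\|_{HS}^2 = \mathrm{tr}({}^{t}M\,M) = -\mathrm{tr}(M^2).$$
Now $\mathrm{tr}(M^2)$ is the sum of the squares of the eigenvalues of $M$, i.e.\
$$\mathrm{tr}(M^2) = \sum_{i=1}^n \bigl((\sqrt{-1}d_i)^2 + (-\sqrt{-1}d_i)^2\bigr) = -2\sum_{i=1}^n d_i(A)^2.$$
Substituting back gives $\delta(A)^2 = \|M\|_{HS}^2 = 2\sum_{i=1}^n d_i(A)^2$, which is (1).

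For part (2): multiplicativity of the determinant yields
$$\det M \;=\; \det({}^{t}\tilde{A})\,\det(J_n)\,\det(\tilde{A}) \;=\; (\det \tilde{A})^2\,\det(J_n).$$
A direct computation (swap the two $n\times n$ block columns of $J_n$, contributing $(-1)^n$, leaving $\mathrm{diag}(I_n,-I_n)$ of determinant $(-1)^n$) shows $\det(J_n)=1$. On the other hand, $\det M$ equals the product of the eigenvalues of $M$:
$$\det M = \prod_{i=1}^n (\sqrt{-1}d_i)(-\sqrt{-1}d_i) = \prod_{i=1}^n d_i(A)^2.$$
Equating the two expressions for $\det M$ gives $(\det \tilde{A})^2 = \prod_i d_i(A)^2$, whence $|\det\tilde{A}| = \prod_{i=1}^n d_i(A)$.

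There is no real obstacle; the lemma is essentially a bookkeeping exercise once one recognises that both $\|M\|_{HS}$ and $|\det\tilde{A}|$ are determined by the spectrum of the skew-symmetric matrix $M$. The only mild care needed is the sign computation showing $\det(J_n)=1$, which I would state inline rather than as a separate sublemma.
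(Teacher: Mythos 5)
Your proof is correct. Note that the paper itself offers no proof of this lemma --- it is quoted as Lemma 11 of \cite{tas} --- so there is nothing in-text to compare against; but your argument is the natural one, and the only difference from what the cited source presumably does is cosmetic: you extract $\|{}^{t}\tilde{A}J_n\tilde{A}\|_{HS}$ and $\det({}^{t}\tilde{A}J_n\tilde{A})$ from the spectrum $\{\pm\sqrt{-1}\,d_i(A)\}$ directly, rather than reading both quantities off the canonical form (\ref{eq3-0}), where the Hilbert--Schmidt norm and the determinant of the block matrix give the same two identities by inspection. Both routes are one-line computations; your spectral version has the minor virtue of not invoking Definition \ref{dfn3-1} at all.
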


\subsection{Geodesics}
Let $A\in\mathcal{A}$ be a matrix of canonical form.
For $i=1,\dots,n$,
define the functions $h_{x_i}$ (resp. $h_{y_i}$ and $h_z$)$:T^{\ast}H_n\to\R$ by $h_{x_i}(p)=p(AX_i(x))$ (resp. $h_{y_i}(p)=p(AY_i(x))$ and $h_z(p)=p(Z(x))$) for $p\in T^{\ast}_x H_n$.
Suppose that an admissible path $\gamma:[0,T]\to H_n$ parametrized by $\gamma(t)=\exp(\sum_{i=1}^{n}x_i(t)AX_i+y_i(t)AY_i+z(t)Z)$ is length minimizing.
By Theorem \ref{thmnormalgeodesic} and Remark \ref{rmkabnormalnormal},
there is a lift $\ell:[0,T]\to T^{\ast}H_n$ of $\gamma$ such that the following Hamiltonian equation holds.
\begin{align*}
	\begin{cases}
		\dot{h}_{x_i}=\lambda_i h_zh_{y_i} & (i=1,\dots,n),\\
		\dot{h}_{y_i}=-\lambda_i h_zh_{x_i} & (i=1,\dots,n),\\
		\dot{h}_z=0,\\
		\dot{x}_i=h_{x_i}, & (i=1,\dots,n)\\
		\dot{y}_i=h_{y_i}, & (i=1,\dots,n)\\
		\dot{z}=\frac{1}{2}\sum_{i=1}^nd_i(A)\left(x_ih_{y_i}-y_{i}h_{x_i}\right)+\rho_A^2p_z,
	\end{cases}
\end{align*}
\noindent where we write $h_{x_i}(t)=h_{x_i}\circ\ell(t)$,
$h_{y_i}(t)=h_{y_i}\circ\ell(t)$ and $h_z(t)=h_z\circ\ell(t)$.

\begin{lemma}[Proposition 3.5 in \cite{ebe} for Riemannian case and Lemma 14 in \cite{riz} for sub-Riemannian case]\label{lemgeod}

	Let $\gamma:[0,T]\to H_n$ be the geodesic issuing from the identity with the initial data of the extremal
	$$\left(h_{x_1}(0),\dots,h_{y_1}(0),\dots,h_z(0)\right)=(p_{x_1},\dots,p_{y_1},\dots,p_z).$$
	Set $\xi_i=p_zd_i(A)$.
	Then $\gamma$ is parametrized as follows.
\begin{itemize}
	\item If $p_z\neq 0$,
		then
$$
	\begin{pmatrix}
		x_{i}(t)\\
	  y_i(t)\end{pmatrix}
	=\frac{1}{\xi_i}\begin{pmatrix}\sin(\xi_it) & \cos(\xi_it)-1\\
		-\cos(\xi_it)+1 & \sin(\xi_i t)
		\end{pmatrix}\begin{pmatrix}
		p_{x_i}\\
		p_{y_i}
	\end{pmatrix}
$$
for each $i=1,\dots,n$,
and
$$
	z(t)=\rho_A^2p_zt+\frac{1}{2}\sum_{i=1}^{n}\left(\frac{\lambda_it}{\xi_i}-\frac{\lambda_i}{\xi_i^2}\sin(\xi_it)\right)\left(p_{x_i}^2+p_{y_i}^2\right).
$$
\item If $p_z=0$,
	then $x_i(t)=p_{x_i}t$,
	$y_i(t)=p_{y_i}t$ and $z(t)\equiv 0$.
\end{itemize}

\end{lemma}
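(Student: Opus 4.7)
The plan is to integrate the Hamiltonian system displayed immediately above the lemma directly, exploiting the fact that it decouples into three layers: a conservation law for $h_z$, planar linear rotations in each $(h_{x_i}, h_{y_i})$ plane, and a final quadrature for the vertical coordinate $z$. First, from $\dot h_z = 0$ one obtains $h_z(t) \equiv p_z$ on $[0,T]$. Substituting this constant into the equations for $(h_{x_i}, h_{y_i})$ produces, for each $i$, a decoupled two-dimensional linear ODE with constant skew-symmetric coefficient matrix whose nonzero entries are $\pm \xi_i$, where $\xi_i = p_z d_i(A)$. When $p_z \neq 0$ this is solved explicitly by a planar rotation at angular velocity $\xi_i$ with initial data $(p_{x_i}, p_{y_i})$; then termwise integration of $\dot x_i = h_{x_i}$ and $\dot y_i = h_{y_i}$ from the identity yields exactly the matrix formula stated for $(x_i(t), y_i(t))$, and geometrically each planar trajectory is an arc of a circle of radius $\sqrt{p_{x_i}^2 + p_{y_i}^2}/|\xi_i|$.

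For the $z$ coordinate the key step is to simplify the bilinear integrand in
$$\dot z(s) = \tfrac{1}{2}\sum_{i=1}^n d_i(A)\bigl(x_i(s) h_{y_i}(s) - y_i(s) h_{x_i}(s)\bigr) + \rho_A^2 p_z$$
before integrating. Introducing complex variables $w_i = x_i + i y_i$ and $\eta_i = h_{x_i} + i h_{y_i}$, the planar subsystem becomes $\dot \eta_i = \mp i \xi_i \eta_i$ with $\dot w_i = \eta_i$, so $\eta_i(s) = e^{\mp i \xi_i s}\eta_i(0)$ and $w_i(s)$ is its primitive. The bilinear combination $x_i h_{y_i} - y_i h_{x_i} = \operatorname{Im}(\overline{w_i}\,\eta_i)$ then collapses to a single-term expression proportional to $\frac{p_{x_i}^2 + p_{y_i}^2}{\xi_i}(1 - \cos(\xi_i s))$ (this is the Kepler-type identity expressing the integrand as the derivative of the swept area). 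Integrating in $s$ and using $\xi_i = p_z d_i(A)$ to rewrite $1/\xi_i$ factors as $d_i(A)/\xi_i^2$ produces precisely the combination $\tfrac{d_i(A) t}{\xi_i} - \tfrac{d_i(A)}{\xi_i^2}\sin(\xi_i t)$ multiplying $p_{x_i}^2 + p_{y_i}^2$ in the target formula, while the constant term $\rho_A^2 p_z$ contributes $\rho_A^2 p_z t$.

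The degenerate case $p_z = 0$ must be handled separately since the $\xi_i^{-1}$ factors are singular there. In this case $\dot h_{x_i} = \dot h_{y_i} = 0$, so $h_{x_i}$ and $h_{y_i}$ remain constant at $p_{x_i}$ and $p_{y_i}$; integration gives $x_i(t) = p_{x_i} t$ and $y_i(t) = p_{y_i} t$, and each summand $x_i h_{y_i} - y_i h_{x_i} = p_{x_i} p_{y_i} t - p_{y_i} p_{x_i} t$ vanishes identically, while $\rho_A^2 p_z = 0$, so $z(t) \equiv 0$. The only substantive obstacle in the argument is the trigonometric simplification of $x_i h_{y_i} - y_i h_{x_i}$ to a single $(1 - \cos)$ term before integration; without this reduction the quadrature would be a tangle of sine-cosine products, but once the complex-variable rewrite or the equivalent swept-area identity is in hand, the remainder is direct antidifferentiation.
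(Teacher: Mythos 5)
Your proposal is correct: the paper gives no proof of this lemma (it is quoted from Eberlein for the Riemannian case and Rizzi for the sub-Riemannian case), and your direct integration of the displayed Hamiltonian system --- constancy of $h_z$, constant-coefficient planar rotation in each $(h_{x_i},h_{y_i})$-plane, the swept-area identity $x_ih_{y_i}-y_ih_{x_i}=\operatorname{Im}(\overline{w_i}\eta_i)\propto\frac{1-\cos(\xi_i s)}{\xi_i}(p_{x_i}^2+p_{y_i}^2)$ before the final quadrature, and the separate treatment of $p_z=0$ --- is exactly the standard argument used in those references. The only caveat is an overall sign ambiguity in the $p_{y_i}$-entry of the rotation matrix and in the $z$-summand, which traces to the bracket and orientation conventions and is already not entirely consistent within the paper's own statement, so it does not reflect a gap in your argument.
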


For later arguments,
we give an explicit distance from the identity to points in the horizontal direction and the vertical direction.
Denote by $\widetilde{dist}_A$ the distance function on $H_n$ associated to $A\in\mathcal{A}$.
\begin{lemma}[Proposition 3.11 in \cite{ebe} for Riemannian case and Lemma 9 in \cite{tas} for general cases]\label{lemhorizontal}
	For $U\in\mv_0$ and $V\in[\h_n,\h_n]$,
we have
$$\widetilde{dist}_A(e,\exp(U+V))\geq\|U\|_A.$$
Moreover,
the equality holds if and only if $V=0$.
\end{lemma}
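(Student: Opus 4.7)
The plan is to exploit the projection $\pi:H_n\to H_n/Z(H_n)\cong\R^{2n}$, viewed as a length-nonincreasing map once $\R^{2n}$ is identified with $\mv_0$ and equipped with the Euclidean norm $\|\cdot\|_A$. Since $V\in[\h_n,\h_n]$ is central, $\pi(\exp(U+V))=U$, so any admissible path from $e$ to $\exp(U+V)$ projects to a path in $\R^{2n}$ from $0$ to $U$.

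For the inequality, I will show that $\ell(\gamma)\geq\ell(\pi\circ\gamma)$ for every admissible path $\gamma:[0,T]\to H_n$. By left-invariance and the fact that $\pi$ is a group homomorphism, this reduces to a check at the identity: $\pi_{\ast}|_e:\h_n\to\R^{2n}$ is the quotient $\h_n\to\h_n/[\h_n,\h_n]$, which restricts to the identity on $\mv_0$ and is norm-nonincreasing overall. Pointwise this gives $\|\pi_{\ast}\dot\gamma(t)\|\leq\|\dot\gamma(t)\|$, so projecting preserves or decreases length. Taking infima yields $\widetilde{dist}_A(e,\exp(U+V))\geq\|U\|_A$.

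For the equality case, let $\gamma$ realize the distance; such a minimizer exists by properness, and is a normal geodesic by Remark \ref{rmkabnormalnormal}. Equality $\ell(\gamma)=\|U\|_A$ forces $\pi\circ\gamma$ to be a minimizing segment from $0$ to $U$ in $\R^{2n}$, i.e.\ a constant-speed parametrization of the straight line, and simultaneously forces $\dot\gamma(t)\in\mathcal{D}_{\gamma(t)}$ (automatic in the sub-Riemannian case; in the Riemannian case this rules out a $Z$-component). Thus $\gamma$ is the unique horizontal lift of the straight segment starting at $e$, namely the one-parameter subgroup $t\mapsto\exp(tU/T)$, whose endpoint is $\exp(U)$. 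Hence $V=0$. Conversely, $t\mapsto\exp(tU)$ is admissible with length $\|U\|_A$, realizing equality.

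The main subtlety will be rigorously justifying, in the Riemannian case ($\rho_A\ne 0$), that the equality $\ell(\gamma)=\ell(\pi\circ\gamma)$ forces $\dot\gamma\in\mathcal{D}$, since there $\pi_{\ast}$ is strictly norm-decreasing on any vector with a nonzero $Z$-component. I expect to handle this by showing that $\pi$ is a Riemannian submersion with horizontal distribution $\mathcal{D}$, which follows from left-invariance together with $\mathcal{D}_g=L_{g\ast}\mv_0$ being orthogonal to $L_{g\ast}\langle Z\rangle=\ker\pi_{\ast}|_g$. A parallel computational route avoids the submersion language by using Lemma \ref{lemgeod} directly: the Cauchy--Schwarz chain $\|U\|_A^2\leq T\int_0^T\sum_i(h_{x_i}^2+h_{y_i}^2)\,dt\leq T\int_0^T 2H\,dt=L^2$ gives the inequality, and equality forces each $h_{x_i}$, $h_{y_i}$ constant and $\rho_Ap_z=0$; combined with the Hamiltonian system and the positivity $d_i(A)>0$ (Lemma \ref{lemfundamental}(2) applied to $\tilde A\in GL_{2n}(\R)$), this yields $p_z=0$ and hence $z(T)\equiv 0$ by Lemma \ref{lemgeod}, so $V=0$.
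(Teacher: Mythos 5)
The paper does not actually prove this lemma; it is imported verbatim from Proposition 3.11 of \cite{ebe} (Riemannian case) and Lemma 9 of \cite{tas} (general case), so there is no in-text proof to compare against line by line. Your argument is correct and self-contained, and it is essentially the standard one: the quotient $\pi:H_n\to H_n/Z(H_n)\cong\mv_0$ is $1$-Lipschitz onto the flat space $(\mv_0,\|\cdot\|_A)$ (an isometry on $\mathcal{D}$ in the sub-Riemannian case, a Riemannian submersion when $\rho_A\neq 0$ since $\mv_0\perp Z$ for $\langle\cdot,\cdot\rangle_A$), which gives the inequality; equality forces the projected curve to be the straight segment and the minimizer to be horizontal, hence the unique horizontal lift $t\mapsto\exp(tU/T)$ with endpoint $\exp(U)$, so $V=0$. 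Your alternative Hamiltonian/Cauchy--Schwarz route also works and is closer in spirit to the explicit-geodesic machinery the paper sets up in Lemma \ref{lemgeod}. One small imprecision there: in the sub-Riemannian case ($\rho_A=0$) the equality conditions do not directly force $p_z=0$; they force $d_i h_z h_{x_i}=d_i h_z h_{y_i}=0$, so either $h_z=p_z=0$ (whence $z\equiv 0$ by Lemma \ref{lemgeod} and $V=0$) or all $h_{x_i},h_{y_i}$ vanish, in which case the minimizer is constant and $U=V=0$ trivially. Both branches give $V=0$, so the conclusion stands, but the dichotomy should be stated.
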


\begin{lemma}[Lemma 10 in \cite{tas}]\label{lemvertical}
	For $p\in\R$,
	the distance from $e$ to $\exp(pZ)$ is given by
	$$\widetilde{dist}_A(e,\exp(pZ))=\min\left\{\left|\frac{p}{\rho_A}\right|,\frac{2}{d_n(A)}\sqrt{|p|\pi d_n(A)-\pi^2\rho_A^2}\right\},$$
where we set $\left|\frac{p}{\rho_A}\right|=+\infty$ if $\rho_A=0$ and $\sqrt{|p|\pi d_n(A)-\pi^2\rho_A^2}=\infty$ if $|p|\pi d_n(A)\leq \pi^2\rho_A^2$.
	\end{lemma}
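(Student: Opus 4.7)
The plan is to invoke the Pontryagin maximal principle (Theorem~\ref{thmextremal}) and the explicit geodesic formulas in Lemma~\ref{lemgeod}, enumerate every normal extremal joining $e$ to $\exp(pZ)$, compute its length, and take the minimum. Because the Heisenberg group has no nontrivial abnormal extremals (Remark~\ref{rmkabnormalnormal}), this enumeration captures all length minimizers. We may take $p>0$ by symmetry. Since the $p_z=0$ case of Lemma~\ref{lemgeod} forces $z\equiv 0$, any geodesic to $\exp(pZ)$ has $p_z\neq 0$, and after reversing direction we may assume $p_z>0$. Applying $x_i(T)=y_i(T)=0$ to the formulas of Lemma~\ref{lemgeod} and computing $\det\begin{pmatrix}\sin(\xi_i T)&\cos(\xi_i T)-1\\ -\cos(\xi_i T)+1&\sin(\xi_i T)\end{pmatrix}=2-2\cos(\xi_i T)$, one obtains the dichotomy: for each $i$, either $(p_{x_i},p_{y_i})=0$, or $\xi_i T = p_z d_i(A) T \in 2\pi\Z_{>0}$.

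Two distinguished candidate geodesics then arise. The \emph{vertical} geodesic, available only when $\rho_A\neq 0$, takes all $p_{x_i}=p_{y_i}=0$; then $z(T)=\rho_A^2 p_z T=p$ and the constant speed $|\rho_A p_z|$ yields length $|p/\rho_A|$, the first term in the claimed minimum. The \emph{single-loop} geodesic takes exactly one active index $i$ with $\xi_i T = 2\pi$, so $T=2\pi/(p_z d_i(A))$; imposing $z(T)=p$ yields
\[
p_{x_i}^2+p_{y_i}^2 \;=\; \frac{p_z^2\bigl(p\, d_i(A)-2\pi\rho_A^2\bigr)}{\pi},
\]
which is nonnegative precisely when $p\,d_i(A)\geq 2\pi\rho_A^2$. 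Using the Hamiltonian identity $\|\dot\gamma\|^2=(p_{x_i}^2+p_{y_i}^2)+\rho_A^2 p_z^2$ to compute the length, the dependence on $(p_z,p_{x_i},p_{y_i})$ collapses and one obtains
\[
L^2 \;=\; \frac{4\pi p}{d_i(A)}-\frac{4\pi^2\rho_A^2}{d_i(A)^2},
\]
which on its feasible range is a decreasing function of $d_i(A)$; the smallest single-loop length is therefore realized at $d_i(A)=d_n(A)$, giving $\tfrac{2}{d_n(A)}\sqrt{\pi p\,d_n(A)-\pi^2\rho_A^2}$, the second term in the claimed minimum.

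It remains to rule out multi-loop and multi-index configurations. For single-index $k_i$-fold loops, the analogous calculation gives $L^2=(4\pi k_i p/d_i)-(4\pi^2 k_i^2\rho_A^2/d_i^2)$, concave in $k_i$ with maximum at $k_i^{\ast}=p\,d_i/(2\pi\rho_A^2)$, so its integer minima over $1\leq k_i\leq k_i^{\ast}$ occur at the endpoints and recover either the $k_i=1$ loop length or (in the limit $k_i\to k_i^{\ast}$, $r_i\to 0$) the vertical length $|p/\rho_A|$. For configurations with several active indices, the simultaneous loop-closing conditions $p_z d_i T\in 2\pi\Z_{>0}$ impose strong commensurability between the $d_i(A)$, and the remarkable identity $L^2=2p\,p_z T-\rho_A^2 p_z^2 T^2$ (which depends only on $T$ and $p_z$), combined with the lower bound $p_z T\geq 2\pi/d_n(A)$ coming from the active indices, shows the multi-index length is never smaller than the single-index optimum. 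The main obstacle is precisely this last comparison: making the integer commensurability constraints interact cleanly with the continuous optimization over $(T,p_z)$. Secondary issues, handled directly by the conventions in the statement, are the degenerate case $\rho_A=0$ (where $|p/\rho_A|=+\infty$ and only the loop family exists) and the infeasibility regime $|p|\pi d_n(A)\leq \pi^2\rho_A^2$, in which the single-loop candidate is declared infinite and the vertical geodesic provides the distance.
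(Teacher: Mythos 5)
The paper itself gives no proof of this lemma: it is imported verbatim from Lemma~10 of \cite{tas} (with \cite{ebe} for the Riemannian case), so there is nothing internal to compare against. Your strategy --- classify all normal extremals from $e$ to $\exp(pZ)$ via Lemma~\ref{lemgeod}, use the determinant $2-2\cos(\xi_iT)$ to get the dichotomy ``$(p_{x_i},p_{y_i})=0$ or $\xi_iT\in2\pi\Z_{>0}$'', and minimize the resulting lengths --- is the right one, and your computations of the vertical length $|p/\rho_A|$ and the single-loop length $L^2=\tfrac{4\pi p}{d_i}-\tfrac{4\pi^2\rho_A^2}{d_i^2}$ check out. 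Moreover, the step you flag as ``the main obstacle'' is in fact already closed by your own identity: since $L^2=2p\,u-\rho_A^2u^2$ with $u=p_zT$ depends only on $u$, is increasing on the feasible range $u\le p/\rho_A^2$, and every loop-type configuration (any number of active indices, any multiplicities) forces $u\in\{2\pi k/d_i\}\subset[2\pi/d_n,\infty)$, the lower bound $L^2\ge \tfrac{4\pi p}{d_n}-\tfrac{4\pi^2\rho_A^2}{d_n^2}$ follows at once; the integer commensurability constraints only shrink the candidate set and are irrelevant to a lower bound.

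The genuine gap is in your last sentence, where you assert that the statement's convention disposes of the infeasibility regime. Your own computation shows the loop candidate exists iff $p_{x_i}^2+p_{y_i}^2\ge 0$, i.e.\ iff $|p|\,d_n(A)\ge 2\pi\rho_A^2$, whereas the convention in the statement only declares the square root infinite when $|p|\pi d_n(A)\le\pi^2\rho_A^2$, i.e.\ $|p|\,d_n(A)\le\pi\rho_A^2$. On the intermediate band $\pi\rho_A^2<|p|\,d_n(A)<2\pi\rho_A^2$ the displayed second term is finite and, since
$$\frac{4}{d_n^2}\bigl(|p|\pi d_n-\pi^2\rho_A^2\bigr)-\frac{p^2}{\rho_A^2}=-\frac{\bigl(|p|d_n-2\pi\rho_A^2\bigr)^2}{\rho_A^2d_n^2}\le 0,$$
it is strictly smaller than $|p/\rho_A|$ there; yet no loop geodesic reaches $\exp(pZ)$ on that band, so the only extremal is the vertical one and the true distance is $|p/\rho_A|$. (That the displayed minimum cannot be the distance there is also clear from continuity: as $|p|\pi d_n\downarrow\pi^2\rho_A^2$ the second term tends to $0$.) So your enumeration proves the formula with the cutoff at $|p|\pi d_n(A)\le 2\pi^2\rho_A^2$ --- at which threshold the two expressions agree, equal to $2\pi|\rho_A|/d_n(A)$ --- and does not prove the statement as literally written. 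You should either flag this as a misprint in the quoted convention (which is what it appears to be; the paper only ever uses the lemma through the upper bound $\sqrt{2\pi/d_n(A)}$, where the discrepancy is harmless) or supply an argument for the band, which your own analysis shows cannot exist.
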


\subsection{Volume forms}
In this section,
we recall an explicit formula of the Riemannian volume form,
the Popp's volume form and the minimal Popp's volume on the Heisenberg Lie group.

For a matrix $A\in\mathcal{A}$ with $\rho_A\neq 0$,
denote by $v(\h_n,A)$ the Riemannian volume form.
Since it is the wedge of the dual coframe of an orthonormal frame,
we have
\begin{equation}\label{eqriemvol}
	v(\h_n,A)=\rho_A^{-1}(\det \tilde{A})^{-1}X_1^{\ast}\wedge\cdots\wedge X_{2n}^{\ast}\wedge Z^{\ast}.
\end{equation}

Next let $A\in\mathcal{A}$ be a matrix with $\rho_A=0$.
Denote by $v(\mv_0,A)$ the Popp's volume associated to the sub-Riemannian structure $(\mv_0,\langle\cdot,\cdot\rangle_A)$.

Since $\hspace{1pt}^{t}\hspace{-2pt}\tilde{A}J_n\tilde{A}$ is the matrix representation of $j_A$ in the basis $\left\{AX_1,\dots,AX_{2n}\right\}$,
its $(i,j)$-th entry coincides with the structure constant $c_{ij}=Z^{\ast}([AX_i,AX_j])$.
By Theorem \ref{thmpopp},
the Popp's volume $v(\mv_0,A)$ is written by
\begin{align}\label{eqpoppvol}
	v(\mv_0,A)=\delta(A)^{-1}(\det\tilde{A})^{-1}X_1^{\ast}\wedge\cdots\wedge X_{2n}^{\ast}\wedge Z^{\ast}.
\end{align}

The explicit formula of the minimal Popp's volume form is given in \cite{tas}.
After a straight forward calculation,
one finds that it is the minimum of the above two forms (\ref{eqriemvol}) and (\ref{eqpoppvol}).
	\begin{proposition}[Proposition 5 in \cite{tas}]\label{propminvol}
		For a matrix $A\in\mathcal{A}_0\cup\mathcal{A}_1$ of canonical form,
		$$v(A)=\min\{|\rho_A|^{-1},\delta(A)^{-1}\}|\det(\tilde{A})|^{-1}X_1^\ast\wedge\cdots\wedge Z^{\ast},$$
		where we write $|\rho_A|^{-1}=\infty$ if $\rho_A=0$.
	\end{proposition}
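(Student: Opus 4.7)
The plan is to exploit the classification of bracket-generating subspaces of $\mv$ to reduce the infimum in Definition \ref{dfnminvol} to at most two explicit candidates. Since $[\h_n, \h_n] = \R Z$ is one-dimensional, any subspace $\mw \subset \h_n$ satisfies $\mw + [\mw, \mw] \subset \mw + \R Z$, so the bracket-generating condition forces either $\mw = \h_n$ or $\dim \mw = 2n$ together with $\mw \cap \R Z = \{0\}$. When $A \in \mathcal{A}_0$ (i.e.\ $\rho_A = 0$) we have $\mv = \mv_0$, and the classification immediately leaves only $\mw = \mv_0$; then (\ref{eqpoppvol}) together with the convention $|\rho_A|^{-1} = \infty$ yields the formula. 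When $A \in \mathcal{A}_1$ (i.e.\ $\rho_A \neq 0$), the candidate $\mw = \h_n$ gives the Riemannian volume (\ref{eqriemvol}) with coefficient $|\rho_A|^{-1}|\det \tilde A|^{-1}$ and the candidate $\mw = \mv_0$ gives (\ref{eqpoppvol}) with coefficient $\delta(A)^{-1}|\det \tilde A|^{-1}$. The real content is to treat an arbitrary $2n$-dimensional complement of $\R Z$ and show that its Popp volume is bounded below by that of $\mv_0$.

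Any such complement has the form $\mw_\phi = \{X + \phi(X) Z \mid X \in \mv_0\}$ for a unique $\phi \in \mv_0^\ast$. Setting $W_i = AX_i + v_i Z$ with $v_i = \phi(AX_i)$ and using $\langle AX_i, AX_j\rangle_A = \delta_{ij}$, $\langle AX_i, Z\rangle_A = 0$ and $\langle Z, Z\rangle_A = \rho_A^{-2}$, the Gram matrix of $\{W_i\}$ is the rank-one perturbation $G = I + \rho_A^{-2} v v^T$, with $\det G = 1 + \rho_A^{-2}|v|^2$. Because $Z$ is central, $[W_i, W_j] = [AX_i, AX_j] = C_{ij} Z$ with $C = {}^{t}\tilde{A} J_n \tilde{A}$, so in an orthonormal frame $\tilde W_i = \sum_j O_{ij} W_j$ (with $O^T O = G^{-1}$) the structure constants are encoded by the skew matrix $O C O^T$, and Theorem \ref{thmpopp} gives
$$B = \|O C O^T\|_{HS}^2 = \mathrm{tr}(C G^{-1} C^T G^{-1}).$$

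Expanding $G^{-1} = I - \beta v v^T$ with $\beta = \rho_A^{-2}/(1 + \rho_A^{-2}|v|^2)$, the skew-symmetry of $C$ gives $\mathrm{tr}(C^2) = -\delta(A)^2$, $v^T C^2 v = -|Cv|^2$ and $v^T C v = 0$ (which kills the quartic term), so the trace collapses to $B = \delta(A)^2 - 2\beta|Cv|^2$. Combining this with the Jacobian $(\det G)^{1/2}|\det \tilde{A}|^{-1}$ for converting $\tilde W_1^\ast \wedge \cdots \wedge \tilde W_{2n}^\ast \wedge Z^\ast$ back to $X_1^\ast \wedge \cdots \wedge Z^\ast$, the Popp volume of $\mw_\phi$ becomes $\sqrt{\det G/B}\,|\det \tilde{A}|^{-1}\, X_1^\ast \wedge \cdots \wedge Z^\ast$. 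The inequality $\delta(A)^2 \det G \geq B$ is immediate from $\det G \geq 1$ and $B \leq \delta(A)^2$, so $\phi = 0$ (i.e.\ $\mw_\phi = \mv_0$) minimizes the Popp volume among complements. Taking the smaller of this with the Riemannian candidate $\mw = \h_n$ yields the claimed $\min\{|\rho_A|^{-1}, \delta(A)^{-1}\}|\det \tilde{A}|^{-1}$.

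The main obstacle is the bookkeeping in the trace expansion: tracking signs from $C^T = -C$, managing the successive changes of dual frame $\{\tilde W_i, Z\} \to \{W_i, Z\} \to \{AX_i, Z\} \to \{X_i, Z\}$ (only two of which have unit Jacobian), and confirming that the quartic term really vanishes because $v^T C v$ is a skew-symmetric quadratic form. Once these are handled, the final minimization is a one-line comparison.
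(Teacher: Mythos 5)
Your proof is correct and follows exactly the route the paper indicates (the paper itself only cites Proposition 5 of \cite{tas} and calls the verification ``a straightforward calculation''): classify the bracket-generating subspaces as $\h_n$ or a graph-type complement $\mw_\phi$ of the center, compute the Popp volume of $\mw_\phi$ via Theorem \ref{thmpopp}, and observe that $\det G\geq 1$ together with $B\leq\delta(A)^2$ forces $\mv_0$ to minimize among complements, leaving the two candidates (\ref{eqriemvol}) and (\ref{eqpoppvol}). The trace expansion, the vanishing of the quartic term via $v^{T}Cv=0$, and the Jacobian bookkeeping are all handled correctly, so nothing is missing.
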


	\subsection{The circle bundle structure}\label{sec9}

	Fix a $n$-tuple of numbers $\bm{r}\in D_n$.
We recall a circle bundle structure of a compact Heisenberg manifold $\Gamma_{\bm{r}}\backslash H_n$.
Let $P_0:H_n\to\h_n\to\mv_0$ be the composition of the logarithem map and the projection.	
Then one obtains a surjective map $\overline{P}_0:\Gamma_{\bm{r}}\backslash H_n\to P_0(\Gamma_{\bm{r}})\backslash \mv_0$ such that the following diagram is commutative.

	\begin{equation}\label{diag1}
		\begin{diagram}
		\node{H_n}
		\arrow{e,t,2}{P_0}
		\arrow{s,l,2}{P_{\Gamma_{\bm{r}}}}
		\node{\mv_0}
		\arrow{s,l,2}{P_{\Z_{\bm{r}}}}\\
		\node{\Gamma_{\bm{r}}\backslash H_n}
		\arrow{e,t,2}{\overline{P}_0}
		\node{P_0(\Gamma_{\bm{r}})\backslash \mv_0}.
	\end{diagram}
\end{equation}
Here the vertical arrows are the quotient map.
The compact Heisenberg manifold $\Gamma_{\bm{r}}\backslash H_n$ has a circle bundle structure by this map $\overline{P}_0$.
For each $b\in P_0(\Gamma_{\bm{r}})\backslash \mv_0$,
	we denote by $F_b$ the fiber over $b$.

	Denote by $\widetilde{dist}_A$ the left invariant sub-Riemannian distance on $H_n$ associated to a matrix $A\in\mathcal{A}$,
	$dist_A$ the quotient distance on $\Gamma_{\bm{r}}\backslash H_n$,
	and $\widetilde{dist}_{\tilde{A}}$ the quotient distance on $\mv_0$.
	Notice that the quotient distance $\widetilde{dist}_{\tilde{A}}$ is induced from the inner product $\langle\cdot,\cdot\rangle_{\tilde{A}}$ on $\mv_0$ whose orthonormal basis is $\{\tilde{A}X_1,\dots,\tilde{A}X_{2n}\}$.

	On the base space $P_0(\Gamma_{\bm{r}})\backslash \mv_0$,
	we can define the quotient distance $dist_{\tilde{A}}$ by the quotient maps $\overline{P}_0$ or $P_{\Z_{\bm{r}}}$,
	since the diagram (\ref{diag1}) is commutative.
	This quotient distance is induced from a flat Riemannian metric,
	denoted by $g_{\tilde{A}}$.

From these arguments,
we can say that the limit of a sequence of compact Heisenberg manifolds is isometric to that of base tori with the quotient flat metrics if their circle fibers converge to a point.
As we see in the following proposition,
we know that every Gromov--Hausdorff limit of a sequence of flat tori is again a torus.

\begin{proposition}[Proposition 3.1 in \cite{bet}]\label{prop9-1}
	Let $\left\{(\T^n,g_k)\right\}_{k\in\N}$ be a sequence of flat tori which converges to $(X,d_X)$ in the Gromov--Hausdorff topology.
	Then $(X,d_X)$ is isometric to a flat torus $(\T^m,g_{\infty})$ for some $m\leq n$.
\end{proposition}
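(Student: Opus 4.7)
The plan is to realize each flat torus $(\T^n,g_k)$ isometrically as $\R^n/\Lambda_k$ for some lattice $\Lambda_k\subset\R^n$ equipped with the standard Euclidean metric, and then to track how the $\Lambda_k$ degenerate as $k\to\infty$. The uniform diameter bound coming from Gromov--Hausdorff convergence forces the largest successive minimum of each $\Lambda_k$ to be uniformly bounded above, by a Minkowski-type volume argument.

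The key step is to split each $\Lambda_k$ into a collapsing part and a surviving part. Let $\mu_1(k)\leq\cdots\leq\mu_n(k)$ be the successive minima of $\Lambda_k$. After passing to a subsequence one may assume that each $\mu_i(k)$ either tends to $0$ or is bounded away from $0$; let $m$ denote the number of indices of the latter type, and fix a threshold $\eta>0$ so that $\mu_{n-m}(k)\to 0$ while $\mu_{n-m+1}(k)\geq\eta$ for large $k$. Choose $n-m$ independent lattice vectors realizing the first $n-m$ successive minima, let $L_k\subset\Lambda_k$ be the sublattice they generate, and set $V_k:=\mathrm{span}_{\R}L_k$ and $W_k:=V_k^{\perp}$, of dimensions $n-m$ and $m$ respectively.

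The orthogonal projection $\pi_k:\R^n\to W_k$ sends $\Lambda_k$ to a lattice $\Lambda_k'\subset W_k\cong\R^m$ whose systole is bounded below by a constant multiple of $\eta$ and whose covolume is bounded above, the latter because the covolume of $\Lambda_k$ itself is bounded above by the diameter bound. Applying Mahler's compactness theorem after composing with suitable rotations and extracting a further subsequence yields a limit lattice $\Lambda_{\infty}\subset\R^m$. Since $\R^n/\Lambda_k$ fibers isometrically over $W_k/\Lambda_k'$ with fiber the flat torus $V_k/L_k$, and the fiber has diameter tending to $0$ (as all its generators do), composing the fibration with the Mahler convergence provides explicit Gromov--Hausdorff approximations $\R^n/\Lambda_k\to\R^m/\Lambda_{\infty}$, identifying the limit as a flat torus $(\T^m,g_\infty)$ with $m\leq n$.

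The main obstacle is the existence of the uniform gap $\eta$ separating the collapsing part $L_k$ from the rest of $\Lambda_k$: this is what ensures that the rank $m$ of the limit is well-defined and supplies the lower bound on the systole of $\Lambda_k'$ required by Mahler's theorem. The gap is obtained by pigeonhole after extracting a subsequence so that each of the $n$ successive minima either converges to $0$ or stays bounded below, using that after rescaling the successive minima lie in a compact set thanks to the diameter bound.
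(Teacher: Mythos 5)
The paper does not actually prove this proposition --- it is quoted verbatim as Proposition 3.1 of the cited reference \cite{bet}, which establishes it by analyzing degenerations in the moduli space of flat metrics on $\T^n$, i.e.\ essentially by the same lattice-theoretic collapse analysis you propose. Your argument is correct in outline and is the standard proof: diameter bound $\Rightarrow$ bounded successive minima, subsequence extraction to get a gap $\eta$ between collapsing and surviving minima, orthogonal splitting $\R^n = V_k \oplus W_k$, the Riemannian submersion onto $W_k/\Lambda_k'$ with fibers of diameter tending to $0$, and Mahler compactness for $\Lambda_k'$. Two places deserve more care in a full write-up. First, for $\pi_k(\Lambda_k)$ to be a lattice with the fiber you describe, you should replace $L_k$ by its primitive closure $\Lambda_k\cap V_k$ (which contains $L_k$ with finite index, so the fiber $V_k/(\Lambda_k\cap V_k)$ still has diameter $\to 0$). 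Second, the lower bound on the systole of $\Lambda_k'$ is the one nontrivial step and you only assert it: given $v\in\Lambda_k\setminus V_k$ with $\|\pi_k(v)\|$ small, use that $V_k/L_k$ has diameter $\epsilon_k\to 0$ to find $w\in L_k$ with $\|v-w\|\leq\|\pi_k(v)\|+\epsilon_k$; since any lattice vector of norm less than $\mu_{n-m+1}(k)$ must lie in the span of the vectors realizing the first $n-m$ minima, i.e.\ in $V_k$, and $v-w\notin V_k$, one gets $\|\pi_k(v)\|\geq\eta-\epsilon_k$. Relatedly, your stated reason for the covolume upper bound on $\Lambda_k'$ (that $\mathrm{covol}(\Lambda_k)$ is bounded) does not suffice, since $\mathrm{covol}(\Lambda_k')=\mathrm{covol}(\Lambda_k)/\mathrm{covol}(\Lambda_k\cap V_k)$ and the denominator tends to $0$; instead bound it directly, e.g.\ by noting that $W_k/\Lambda_k'$ has diameter at most $D$, or that the projections of the vectors realizing $\mu_{n-m+1},\dots,\mu_n$ give $m$ independent vectors of $\Lambda_k'$ of bounded length. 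With these repairs the proof is complete.
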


\section{The conditions for fibers collapsing to a point}\label{sec10}
Let $\left\{(\Gamma_{\bm{r}(k)}\backslash H_n,dist_k)\right\}$ be a sequence of compact sub-Riemannian Heisenberg manifolds with the diameter upper bound by $D>0$.
	In this section,
	we show that if the sequence collapses,
	then the diameter of the circle fibers converge to zero.

	The fiber over $b\in \Gamma_{\bm{r}}\backslash H_n $ is written by $F_b=\left\{\Gamma_{\bm{r}(k)}\exp(\R Z)h_b\right\}$,
	where we fix $h_b\in P_{\Gamma_{\bm{r}(k)}}^{-1}\left(\overline{P}_0^{-1}\left(b\right)\right)\subset H_n$.
		In particular,
		the subset $\left\{\exp(tZ)h_b\mid t\in[0,1)\right\}\subset H_n$ is a representative of $F_b$.
		By the homogeneity of the restricted distance on $F_b$,
		its diameter is the half of the distance from $h_b$ to $\exp(Z)h_b$ and is independent of the choice of $h_b$.
Moreover,
again by the homogeneity of the distance on $H_n$,
			the diameter of the fiber $F_b$ is independent of the choice of a point $b$ in the base $P_0(\Gamma_{\bm{r}})\backslash H_n$.

			The above argument shows the following lemma.
			\begin{lemma}\label{lemfiberdiam}
				The diameter of the fibers $F_b$ is given by
				$$diam(F_b)=\widetilde{dist}\left(e,\exp\left(\frac{1}{2}Z\right)\right).$$
			\end{lemma}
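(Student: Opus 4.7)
Plan: My strategy is to parametrize $F_b$ as a topological circle via central translations, exploit the resulting $\R/\Z$-symmetry to reduce to a one-variable optimization, and finish using monotonicity of the vertical distance formula from Lemma \ref{lemvertical}.

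First I would set up the identification $\R/\Z \to F_b$, $t + \Z \mapsto [\exp(tZ) h_b]$. This is a bijection because $\Gamma_{\bm{r}} \cap \exp(\R Z) = \exp(\Z Z)$: the inclusion $\supseteq$ is by construction, and the reverse follows by reading off the explicit generating set for $\Gamma_{\bm{r}}$ from Theorem \ref{thm2-2.1} (the commutators of the generators give only integer multiples of $Z$, already captured by $\exp(\Z Z)$). Next, central left translation $\exp(uZ)$ is an isometry of $H_n$ by left invariance and commutes with every element of $\Gamma_{\bm{r}}$ since $Z$ is central, so it descends to an isometry of $\Gamma_{\bm{r}} \backslash H_n$ that stabilizes each fiber setwise. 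Under the parametrization this is rotation by $u$, which shows that the restricted distance on $F_b$ is $\R/\Z$-invariant, and hence $\mathrm{diam}(F_b) = \sup_u f(u)$ where $f(u) = d_F([h_b], [\exp(uZ) h_b])$.

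Second, I would reduce $f$ to an explicit formula. Because lifts of fiber points are compared within the single central coset $\exp(\R Z) h_b$ modulo $\exp(\Z Z)$-translations, left invariance and the centrality of $Z$ give $f(u) = \min_{k \in \Z} \widetilde{dist}_A(e, \exp((u+k)Z))$. By Lemma \ref{lemvertical}, the function $p \mapsto \widetilde{dist}_A(e, \exp(pZ))$ is the minimum of two non-negative functions each monotonically non-decreasing in $|p|$, hence itself non-decreasing in $|p|$. For $u \in [0,1]$ the minimum over $k$ is therefore attained at $k = 0$ or $k = -1$, yielding $f(u) = \widetilde{dist}_A(e, \exp(\min(u, 1-u)Z))$, and the supremum over $u \in [0,1]$ is attained at $u = 1/2$, giving the claimed value $\widetilde{dist}_A(e, \exp(\tfrac{1}{2} Z))$. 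Independence of the choice of $h_b$ falls out automatically because the entire computation involves only the left-invariant distance $\widetilde{dist}_A$ on $H_n$ and the central direction $Z$.

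The step I anticipate as the main source of care is pinning down the exact meaning of the "restricted distance on $F_b$": specifically, that lifts are taken within the single central coset $\exp(\R Z) h_b$, so that the infimum of lifting lengths runs over $\exp(\Z Z)$ rather than over the full lattice $\Gamma_{\bm{r}}$. This is precisely what the preamble invokes by "homogeneity of the restricted distance on $F_b$", and it is what makes the asserted \emph{equality} — rather than merely the upper bound $\mathrm{diam}(F_b) \leq \widetilde{dist}_A(e, \exp(\tfrac{1}{2} Z))$ — go through; the latter is in any case all that the collapsing argument in the main theorem will require downstream.
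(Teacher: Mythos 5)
Your proof is correct and follows essentially the same route as the paper, which establishes this lemma only through the informal paragraph preceding it (central translations act transitively by isometries on the fiber circle, so the diameter is realized at the midpoint $\exp(\tfrac{1}{2}Z)h_b$); you merely supply the monotonicity step that the paper leaves implicit. Your closing caveat is substantive and worth recording: if $diam(F_b)$ is read as the diameter of $F_b$ in the ambient quotient distance $dist_A$, the stated \emph{equality} can actually fail, because a non-central lattice element may bring two fiber points closer together than any central lift does. For instance, on $\Gamma_1\backslash H_1$ with $\tilde{A}=\epsilon^{-1}I_2$ and $\rho_A=0$ one has $\exp(X_1+X_2)=\exp(X_1)\exp(X_2)\exp(-Z)\cdot\exp(\tfrac{1}{2}Z)\in\Gamma_1\exp(\tfrac{1}{2}Z)$, so $dist_A(\Gamma_1 e,\Gamma_1\exp(\tfrac{1}{2}Z))\leq\|X_1+X_2\|_A=\epsilon\sqrt{2}$ by Lemma \ref{lemhorizontal}, whereas Lemma \ref{lemvertical} gives $\widetilde{dist}_A(e,\exp(\tfrac{1}{2}Z))=\epsilon\sqrt{2\pi}$. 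So the equality holds only under the coset-quotient reading you adopt, while under the ambient reading one retains only the inequality $\leq$ --- which, as you correctly observe, is all that Proposition \ref{prop9-2} and the main theorem require.
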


		Let us pass to the estimate of the diameter.
		First we consider a sequence $\left\{\left(\Gamma_{\bm{r}(k)},dist_{A_k}\right)\right\}_{k\in\N}$ such that $\bm{r}(k_1)\neq \bm{r}(k_2)$ for any $k_1\neq k_2$.
		This implies that the sequence of numbers $\{r_n(k)\}$ diverges.

	\begin{lemma}\label{lem9-2}
		Assume that $r_n(k)$ diverge to infinity.
		Then the diameters of the fibers $F_b$ converge to zero.
	\end{lemma}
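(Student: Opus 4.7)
The plan is to exploit the circle bundle structure from Section~\ref{sec9}. Since $P_0:H_n\to\mv_0$ simply drops the vertical component of an admissible path, it is $1$-Lipschitz from the sub-Riemannian distance $\widetilde{dist}_{A_k}$ to the Euclidean distance associated with $\langle\cdot,\cdot\rangle_{\tilde{A}_k}$. Passing to quotients, $\overline{P}_0$ is $1$-Lipschitz as well, so the base torus $(P_0(\Gamma_{\bm{r}(k)})\backslash\mv_0,g_{\tilde{A}_k})$ has diameter at most $D$. In particular its total flat volume is bounded above by the volume of a single Euclidean ball of radius $D$, say $C_n D^{2n}$. On the other hand, because $P_0(\Gamma_{\bm{r}(k)})$ is the lattice spanned by $r_1(k)X_1,\dots,r_n(k)X_n,X_{n+1},\dots,X_{2n}$, a direct change of variables shows the same volume equals $r_1(k)\cdots r_n(k)/|\det\tilde{A}_k|$. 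Comparing gives
\begin{equation*}
|\det\tilde{A}_k|\;\geq\;\frac{r_1(k)\cdots r_n(k)}{C_n D^{2n}}\;\geq\;\frac{r_n(k)}{C_n D^{2n}}.
\end{equation*}

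Next I would feed this into Lemma~\ref{lemfundamental}(2), which yields $|\det\tilde{A}_k|=\prod_{i=1}^n d_i(A_k)\leq d_n(A_k)^n$, and therefore
\begin{equation*}
d_n(A_k)\;\geq\;\bigl(r_n(k)/(C_n D^{2n})\bigr)^{1/n}\;\longrightarrow\;\infty.
\end{equation*}
This is the conceptual heart of the proof: the purely combinatorial divergence $r_n(k)\to\infty$ forces divergence of the spectral invariant $d_n(A_k)$ that controls the intensity of the $j$-operator, via the flat-torus volume comparison on the base.

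The conclusion is then routine. By Lemmas~\ref{lemfiberdiam} and \ref{lemvertical} with $p=1/2$,
\begin{equation*}
\mathrm{diam}(F_b)=\min\!\left\{\frac{1}{2|\rho_{A_k}|},\;\frac{2}{d_n(A_k)}\sqrt{\tfrac{\pi}{2}d_n(A_k)-\pi^2\rho_{A_k}^2}\right\},
\end{equation*}
with the usual conventions. Split on the size of $\rho_{A_k}^2$ relative to $d_n(A_k)$: if $\rho_{A_k}^2\leq d_n(A_k)/(4\pi)$ then the second entry is real and bounded above by $\sqrt{2\pi/d_n(A_k)}\to 0$; otherwise $|\rho_{A_k}|>\sqrt{d_n(A_k)/(4\pi)}\to\infty$ so the first entry tends to $0$. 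In either case $\mathrm{diam}(F_b)\to 0$. The main obstacle is identifying $d_n(A_k)$ as the right intermediate quantity and pinning down its divergence; the final dichotomy, while essential so that a large $\rho_{A_k}$ does not defeat the argument, falls out immediately from the explicit vertical distance formula.
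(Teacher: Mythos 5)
Your argument is correct, but it is a genuinely different route from the paper's. The paper's proof never passes through $d_n(A_k)$ or the vertical distance formula: it uses the diameter bound directly on the lattice points $\exp(\tfrac{r_n(k)}{2}X_n)$ and $\exp(\tfrac12 X_{2n})$ to get $\|X_n\|_{A_k}\leq 2D/r_n(k)$ and $\|X_{2n}\|_{A_k}\leq 2D$, and then exhibits an explicit commutator rectangle with sides $\sqrt{r_n(k)/2}\,X_n$ and $(2r_n(k))^{-1/2}X_{2n}$ ending at $\exp(\tfrac12 Z)$, of length at most $4\sqrt2 D/\sqrt{r_n(k)}$. That construction is elementary (it needs only Lemma \ref{lemfiberdiam} and the bracket relation $[X_n,X_{2n}]=Z$), works uniformly in $\rho_{A_k}$ with no case split, and yields the sharper quantitative rate $O(r_n(k)^{-1/2})$. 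Your route instead converts the diameter bound into a volume bound on the base flat torus via the $1$-Lipschitz projection (which is indeed justified by Lemma \ref{lemhorizontal} and left invariance), deduces $|\det\tilde A_k|\gtrsim r_n(k)$ and hence $d_n(A_k)\to\infty$ from Lemma \ref{lemfundamental}, and finishes with Lemmas \ref{lemfiberdiam} and \ref{lemvertical}; your dichotomy on $\rho_{A_k}^2$ versus $d_n(A_k)/(4\pi)$ correctly handles the convention issues in the vertical distance formula. What your approach buys is conceptual unification: it reduces Lemma \ref{lem9-2} to exactly the mechanism of case (b) of Proposition \ref{prop9-2} ($d_n\to\infty$ forces fiber collapse), at the cost of invoking the geodesic machinery behind Lemma \ref{lemvertical} and of a weaker rate $O(r_n(k)^{-1/(2n)})$. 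Both proofs are valid; neither has a gap.
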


	\begin{proof}
		Put $\gamma_{n,k}=\exp(\frac{r_n(k)}{2}X_n)$.
		Since $\gamma_{n,k}$ is on the plane $\exp(\mv_0)$,
		a length minimizing path from $e$ to $\gamma_{n,k}$ in $H_n$ is the straight segment $\exp(sX_n)$,
		$s\in [0,\frac{r_n(k)}{2}]$.
		Moreover its projection by $P_{\Gamma_{\bm{r}(k)}}$ is a length minimizing path from $\Gamma_{\bm{r}(k)} e$ to $\Gamma_{\bm{r}(k)}\gamma_{n,k}$.
Indeed,
any element in $\Gamma_{\bm{r}(k)}\gamma_{n,k}$ is written by
$$\exp\left(r_n(k)\left(z+\frac{1}{2}\right)X_n+P\right),$$
where $z\in\Z$ and $P$ is a element in $\h_n$ transverse to $X_n$.
Clearly a length minimizing path from $\Gamma_{\bm{r}(k)} e$ to $\Gamma_{\bm{r}(k)} \gamma_{n,k}$ is realized when
$$z=0,-1 ~~~\text{and}~~~P=0.$$
This shows that the projection of the straight segment $\exp(sX_n)$ is length minimizing.

Since the length of the straight segment $\exp(sX_n)$ is $\left\|\frac{r_n(k)}{2}X_n\right\|_{A_k}$,
we obtain
\begin{align}\label{eq5-11}
	\left\|\frac{r_n(k)}{2}X_n\right\|_{A_k}=\widetilde{dist}_{A_k}\left(e,\gamma_{n,k}\right)
	&=dist_{A_k}\left(\Gamma_{\bm{r}(k)} e,\Gamma_{\bm{r}(k)} \gamma_{n,k}\right)\\
	&\leq diam\left(\Gamma_{\bm{r}(k)}\backslash H_n,dist_{A_k}\right)\\
	&\leq D.
		\end{align}

		By the same argument we also show that
		\begin{equation}\label{eqx2n}
			\left\|\frac{X_{2n}}{2}\right\|_{A_k}\leq D.
		\end{equation}
	
		On the other hand,
		let $c:[0,4]\to H_n$ be a path inductively defined by
		$$c(t)=\begin{cases}
			\exp(-t\sqrt{\frac{r_n(k)}{2}}X_n) & t\in[0,1],\\
			c(1)\exp(-(t-1)\frac{1}{\sqrt{2r_n(k)}}X_{2n}) & t\in[1,2],\\
			c(2)\exp((t-2)\sqrt{\frac{r_n(k)}{2}}X_n) & t\in[2,3],\\
			c(3)\exp((t-3)\frac{1}{\sqrt{2r_n(k)}}X_{2n}) & t\in[3,4].
		\end{cases}
	$$
	The endpoint of $c$ is $c(4)=\exp(\frac{1}{2}Z)$,
	and the length is computed as
	\begin{align*}
		length(c)&=\|\sqrt{2r_n(k)}X_n\|_{A_k}+\left\|\sqrt{\frac{2}{r_n(k)}}X_{2n}\right\|_{A_k}\\
			 &=\sqrt{2r_n(k)}\|X_n\|_{A_k}+\sqrt{\frac{2}{r_n(k)}}\|X_{2n}\|_{A_k}\\
			 &\leq \sqrt{2r_n(k)}\frac{2D}{r_n(k)}+\sqrt{\frac{2}{r_n(k)}}2D\\
			 &=\frac{4\sqrt{2}D}{\sqrt{r_n(k)}}.
	\end{align*}
		Here the third inequality follows from (\ref{eq5-11}) and (\ref{eqx2n}).
		Hence we obtain
		$$\tilde{d}_{A_k}\left(e,\exp\left(\frac{1}{2}Z\right)\right)\leq length (c)\leq \frac{4\sqrt{2}D}{\sqrt{r_n(k)}}.$$
		Since $r_n(k)$ diverges to the infinity,
		the diameters of the fibers converge to zero.
	\end{proof}

	By Lemma \ref{lem9-2},
	we only need to consider a sequence consisting of a fixed diffeomorphism type $\Gamma_{\bm{r}}\backslash H_n$.
In the following proposition we fix a diffeomorphism type.

	\begin{proposition}\label{prop9-2}
	Let $\{\Gamma_{\bm{r}}\backslash H_n,dist_{A_k}\}$ be a sequence of compact Heisenberg manifolds with left invariant sub-Riemannian metrics.
	If the total measure in the minimal Popp's volume converges to zero,
	then the diameter of the fibers converges to zero.
	\end{proposition}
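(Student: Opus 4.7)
The plan is to argue by contraposition: if the fiber diameters stay bounded away from zero along some subsequence, then the total minimal Popp's measure is also bounded below by a positive constant, contradicting the collapse hypothesis.

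By Lemma \ref{lemfiberdiam}, $diam(F_b) = \widetilde{dist}_{A_k}(e, \exp(Z/2))$, and applying Lemma \ref{lemvertical} with $p = 1/2$ yields the explicit formula
\[
diam(F_b) = \min\left\{\frac{1}{2|\rho_{A_k}|},\ \frac{2}{d_n(A_k)}\sqrt{\frac{\pi\, d_n(A_k)}{2} - \pi^2 \rho_{A_k}^2}\right\}.
\]
Extract a subsequence along which $diam(F_b) \geq \epsilon$ for some $\epsilon > 0$. Each entry of the minimum is then at least $\epsilon$. The first inequality immediately gives $|\rho_{A_k}| \leq 1/(2\epsilon)$. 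Squaring the second and discarding the nonnegative term $4\pi^2 \rho_{A_k}^2 / d_n(A_k)^2$ gives $2\pi/d_n(A_k) \geq \epsilon^2$, so $d_n(A_k) \leq 2\pi/\epsilon^2$. Since the canonical form orders $d_1(A_k) \leq \cdots \leq d_n(A_k)$, every $d_i(A_k)$ satisfies the same upper bound.

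Next, compute the total minimal Popp's measure using Proposition \ref{propminvol} together with Lemma \ref{lemfundamental}. The volume form is
\[
v(A_k) = \min\{|\rho_{A_k}|^{-1}, \delta(A_k)^{-1}\}\,|\det\tilde{A}_k|^{-1}\, X_1^{\ast}\wedge\cdots\wedge Z^{\ast},
\]
with $|\det\tilde{A}_k| = \prod_i d_i(A_k)$ and $\delta(A_k) = \sqrt{2\sum_i d_i(A_k)^2}$, while a fundamental domain for $\Gamma_{\bm{r}}$ has fixed positive Haar measure $C_{\bm{r}}$ depending only on $\bm{r}$. Combined with the bounds from the previous paragraph, each of $\prod_i d_i(A_k)^{-1}$, $|\rho_{A_k}|^{-1}$, and $\delta(A_k)^{-1}$ is bounded below by a positive constant depending only on $\epsilon$, $n$, and $\bm{r}$. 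Hence the total minimal Popp's measure is bounded below away from zero, contradicting the collapse hypothesis.

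The only subtlety lies in the two degenerate regimes of Lemma \ref{lemvertical}: $\rho_{A_k} = 0$ makes the first entry of the minimum equal to $+\infty$, and $\frac{1}{2}\pi d_n(A_k) \leq \pi^2 \rho_{A_k}^2$ makes the second entry equal to $+\infty$. In the first case the second entry alone still forces $d_n(A_k) \leq 2\pi/\epsilon^2$, and in the second case the first entry gives $|\rho_{A_k}| \leq 1/(2\epsilon)$, which combined with the degeneracy condition $d_n(A_k) \leq 2\pi \rho_{A_k}^2$ yields $d_n(A_k) \leq \pi/(2\epsilon^2)$. So both bounds hold in every regime, and I expect no serious obstacle beyond this edge-case bookkeeping.
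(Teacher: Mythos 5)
Your proof is correct and is essentially the contrapositive of the paper's own argument: both rest on the same three ingredients (the fiber-diameter formula from Lemmas \ref{lemfiberdiam} and \ref{lemvertical}, the minimal Popp's volume formula from Proposition \ref{propminvol}, and the relations $|\det\tilde{A}|=\prod d_i$, $\delta=\sqrt{2\sum d_i^2}$ from Lemma \ref{lemfundamental}), with the paper running the implication forward in two cases ($\min\{|\rho_{A_k}|^{-1},\delta(A_k)^{-1}\}\to 0$ or $|\det\tilde{A}_k|^{-1}\to 0$) while you invert the same inequalities. Your handling of the two degenerate regimes of Lemma \ref{lemvertical} is a nice piece of bookkeeping, but the substance of the argument is the same.
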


	\begin{proof}
		By Proposition \ref{propminvol},
		the total measure of a compact Heisenberg manifold $\{\Gamma_{\bm{r}}\backslash H_n,dist_{A_k}\}$ is
		\begin{align*}
			meas(\Gamma_{\bm{r}}\backslash H_n,A)&:=\left|\int_{\Gamma_{\bm{r}}\backslash H_n}v(A_k)\right|\\
			&=\left|\int_{\Gamma_{\bm{r}}\backslash H_n}\min\{|\rho_{A_k}^{-1}|,\delta(A_k)^{-1}\}\det(\tilde{A}_k)X_1^{\ast}\wedge\cdots\wedge Z^{\ast}\right|\\
			&=\min\{|\rho_{A_k}|^{-1},\delta(A_k)\}|\det(\tilde{A}_k)|^{-1}\prod_{i=1}^nr_i.
	\end{align*}
		Hence,
		if the total measure converges to zero,
		then one of the following two cases holds.
		\begin{itemize}
			\item[(a)]$\min\{|\rho_{A_k}|^{-1},\delta(A_k)^{-1}\}\to 0$,~or
			\item[(b)]$|\det(\tilde{A}_k)|^{-1}\to 0$.
		\end{itemize}

		In the case (a),
by using Lemma \ref{lemfundamental},
\ref{lemvertical} and \ref{lemfiberdiam},
we have
\begin{align*}
	diam(F_b)&=\widetilde{dist}_{A_k}\left(e,\exp\left(\frac{1}{2}Z\right)\right)\\
&=\min\left\{\left|\frac{1}{2\rho_{A_k}}\right|,\frac{2}{d_n(A_k)}\sqrt{\frac{\pi d_n(A_k)}{2}-\pi^2\rho_{A_k}^2}\right\}\\
&\leq \min\left\{\left|\frac{1}{2\rho_{A_k}}\right|,\sqrt{\frac{2\pi}{d_n(A_k)}}\right\}\\
&\leq \min\left\{\left|\frac{1}{2\rho_{A_k}}\right|,\frac{2\sqrt{n\pi}}{\delta(A_k)}\right\}\to 0~~(k\to\infty).
\end{align*}

In the case (b),
again by using Lemma \ref{lemfundamental},
\ref{lemvertical} and \ref{lemfiberdiam},
we have
\begin{align*}
	diam(F_b)&=\min\left\{\left|\frac{1}{2\rho_{A_k}}\right|,\frac{2}{d_n(A_k)}\sqrt{\frac{\pi d_n(A_k)}{2}-\pi^2\rho_{A_k}^2}\right\}\\
	&\leq\sqrt{\frac{2\pi}{d_n(A_k)}}\\
	&\leq \sqrt{\frac{2\pi}{\sqrt[n]{|\det(\tilde{A}_k)|}}}\to 0 ~~(k\to\infty).
\end{align*}

In both cases,
the diameter of the fiber $F_b$ converges to zero.
This concludes the proposition.
\end{proof}

	Trivially Lemma \ref{lem9-2},
	Proposition \ref{prop9-1} and \ref{prop9-2} show the main theorem.

	\appendix
	
	\section{Appendix}\label{secapp}
	In the appendix,
	we consider the systolic inequality on sub-Riemannian compact Heisenberg manifolds.
	Namely we show that the systolic inequality holds when the Popp's volume is applied,
	and that the inequality fails when the minimal Popp's volume is applied.

	First we recall the definition of systole.
	\begin{definition}
		The ($1$-)systole of a length space $(X,dist)$ is defined by
		$$sys(X,dist):=\inf\{length(c)\mid [c]\in H_1(X,\Z)\setminus\{0\}\}.$$
	\end{definition}

	\begin{example}
		For a torus with a flat Riemannian metric $(\Z^n\backslash \R^n,g)$,
		denote by $dist_g$ the associated distance function and $vol_g$ the Riemannian volume form.
		The systole is written by
		$$sys(\Z^n\backslash \R^n,dist_g)=\min\{\widetilde{dist_g}(0,z)\mid z\in\Z^n\setminus\{0\}\}.$$
	The Minkowski convex body theorem asserts that the systole has an upper bound
	\begin{equation}\label{sysineq}
		sys(\Z^n\backslash\R^n,dist_g)\leq \tilde{C}_n\left|\int_{\Z^n\backslash \R^n}vol_g\right|^{\frac{1}{n}},
	\end{equation}
	where $\tilde{C}_n$ is the constant dependent only on the dimension $n$.
	\end{example}

	The inequality (\ref{sysineq}) is called the systolic inequality.
	It is generalized to a non-flat $2$-dimensional torus (by Loewner unpublished,
	please see \cite{pu}),
	$2$-dimensional projective space \cite{pu},
	closed surfaces \cite{acc,bla},
	and essential manifolds \cite{gro}.

Our aim is to generalize these results to the sub-Riemannian setting with the Popp's volume.
For a matrix $A=\begin{pmatrix}
	\tilde{A} & 0\\
	0 & 0
\end{pmatrix}\in\mathcal{A}\cap\{\rho_A=0\}$,
we will denote by $dist_A$ the associated sub-Riemannian distance on $\Gamma_{\bm{r}}\backslash H_n$ and $v(\mv_0,A)$ the Popp's volume form.

\begin{theorem}\label{thmapp1}

	There is a positive constant $C_n$ dependent only on $n$ such that for every $2n+1$-dimensional compact Heisenberg manifold $(\Gamma_{\bm{r}}\backslash H_n,A)$,
	$$sys(\Gamma_{\bm{r}}\backslash H_n,dist_A)\leq C_n\left|\int_{\Gamma_{\bm{r}}\backslash H_n}v(\mv_0,A)\right|^{\frac{1}{2n+2}}.$$
\end{theorem}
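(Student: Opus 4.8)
The plan is to bound the systole from above by finding a short noncontractible loop, and bound the Popp volume from below, then combine. The manifold $\Gamma_{\bm{r}}\backslash H_n$ is a circle bundle over the torus $T^{2n}$, so $H_1$ is generated by the horizontal loops coming from the lattice generators $\exp(r_iX_i)$, $\exp(X_{n+i})$ in $\mv_0$ together with the vertical fiber class $\exp(Z)$. By Lemma~\ref{lemhorizontal}, a horizontal lattice vector $U\in\mv_0$ gives a loop of length exactly $\|U\|_A$, and the fiber loop has length $\widetilde{dist}_A(e,\exp(Z))$ governed by Lemma~\ref{lemvertical}. So the systole is at most the shortest of these, and I expect the horizontal classes to be the relevant ones. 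Let me think through the scaling.

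First I would make the volume explicit. Since $\rho_A=0$ here, Proposition~\ref{propminvol}/equation~(\ref{eqpoppvol}) gives the Popp volume density $\delta(A)^{-1}|\det\tilde{A}|^{-1}$, and integrating over the fundamental domain (whose coordinate volume is $\prod_{i=1}^n r_i$, the vertical $Z$-direction contributing length one) yields
\begin{equation}\label{eqappvol}
\left|\int_{\Gamma_{\bm{r}}\backslash H_n}v(\mv_0,A)\right|=\delta(A)^{-1}|\det\tilde{A}|^{-1}\prod_{i=1}^n r_i.
\end{equation}
By Lemma~\ref{lemfundamental} this equals $\delta(A)^{-1}\bigl(\prod_i d_i(A)\bigr)^{-1}\prod_i r_i$. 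This is the quantity I must raise to the power $\frac{1}{2n+2}$, matching the Hausdorff dimension $2n+2$ of the $2$-step nilpotent metric, as the abstract promises.

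Next I would produce the short loop. The horizontal loops inside the base torus $P_0(\Gamma_{\bm{r}})\backslash\mv_0$ have lengths measured by the flat metric $\langle\cdot,\cdot\rangle_{\tilde{A}}$, a $2n$-dimensional flat torus whose covolume is $|\det\tilde{A}|^{-1}\prod_i r_i$. Applying the Minkowski/Euclidean systolic bound (\ref{sysineq}) in dimension $2n$ to this base torus produces a lattice vector $U$ with
$$\|U\|_A\leq \tilde{C}_{2n}\Bigl(|\det\tilde{A}|^{-1}\textstyle\prod_i r_i\Bigr)^{\frac{1}{2n}}.$$
This horizontal loop projects to a nontrivial cycle downstairs, but I must check it remains noncontractible upstairs — the horizontal lattice generators do survive in $H_1(\Gamma_{\bm{r}}\backslash H_n,\Z)$ since the abelianization of $\Gamma_{\bm{r}}$ kills only a power of the central generator, not the $X_i$ classes. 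So $sys\leq\tilde{C}_{2n}(|\det\tilde{A}|^{-1}\prod_i r_i)^{1/2n}$. The final step is the exponent bookkeeping: I want the bound in terms of the $2n+2$-volume, so I need to pass from the $\frac{1}{2n}$ power of the base covolume to the $\frac{1}{2n+2}$ power of the total volume, which differs by a factor of $\delta(A)$. Here I would exploit that $\delta(A)$ (equivalently the largest $d_n(A)$) controls the fiber length via Lemma~\ref{lemvertical}: when $\delta(A)$ is large the fiber loop $\exp(Z)$ becomes short, giving an alternative short cycle, whereas when $\delta(A)$ is small the base systolic bound already beats the target.

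\emph{The main obstacle} will be reconciling these two competing estimates into a single clean inequality with the correct exponent $\frac{1}{2n+2}$. Concretely, I would split into cases according to the size of $\delta(A)$ relative to the base covolume: in the regime where $\delta(A)$ is large I bound the systole by the short fiber loop (length $\lesssim d_n(A)^{-1/2}\lesssim\delta(A)^{-1/2}$ by Lemma~\ref{lemvertical}), and in the complementary regime by the horizontal estimate, then verify that in each case the geometric mean structure forces the product $\delta(A)^{-1}|\det\tilde{A}|^{-1}\prod_i r_i$ to the power $\frac{1}{2n+2}$ to dominate. Carrying out the interpolation so that the case boundary is exactly where the two bounds cross, and extracting a single dimensional constant $C_n$ from $\tilde{C}_{2n}$ and the universal factors in Lemma~\ref{lemvertical}, is the delicate part; everything else is the routine substitution of Lemma~\ref{lemfundamental} and (\ref{eqappvol}).
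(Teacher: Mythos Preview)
Your proposal is correct and identifies exactly the same ingredients as the paper: the volume formula~(\ref{eqpoppvol}), the base-torus systolic bound~(\ref{sysineq}) giving a short horizontal loop $s_1$, and the fiber length $s_2=2\sqrt{\pi/d_n(A)}$ from Lemma~\ref{lemvertical}. The only difference is in the final bookkeeping. You plan a case split on the size of $\delta(A)$ and an interpolation at the crossover, which works but is unnecessary: since $sys\leq\min\{s_1,s_2\}$, the paper simply writes
\[
sys^{2n+2}\leq s_1^{2n}\,s_2^{2}\leq \tilde{C}_{2n}^{2n}\Bigl(\textstyle\prod_i r_i\Bigr)|\det\tilde{A}|^{-1}\cdot 4\pi\,d_n(A)^{-1},
\]
and then replaces $d_n(A)^{-1}$ by a constant times $\delta(A)^{-1}$ via Lemma~\ref{lemfundamental}(1), recovering exactly the total Popp volume. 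The one-line inequality $\min\{a,b\}^{p+q}\leq a^p b^q$ absorbs your entire case analysis and yields the constant $C_n$ directly.
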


\begin{remark}
The exponent is equal to the inverse of the Hausdorff dimension $\frac{1}{2n+2}$.
It is open whether this coincidence holds for every $2$-step nilmanifolds.
\end{remark}

\begin{proof}
	From its definition,
	the systole of a compact Heisenberg manifold\\
	$(\Gamma_{\bm{r}}\backslash H_n,dist_A)$ is given by
	$$sys(\Gamma_{\bm{r}}\backslash H_n,dist_A)=\min\left\{\widetilde{dist}_A(e,\gamma)\mid \gamma\in\Gamma_{\bm{r}}\setminus\{e\right\}.$$
	By Lemma \ref{lemhorizontal} and \ref{lemvertical},
	the systole is the minimum of $s_1(\bm{r},A)$ and $s_2(A)$,
	where
		\begin{equation}\label{eqs1}
			s_1(\bm{r},A):=\min\{\|X\|_A\mid X\in P_0(\Gamma_{\bm{r}})\subset\mv_0\},\end{equation}
		and
		\begin{equation}\label{eqs2}
			s_2(A):=\min\{dist_A(e,pZ)\mid p\in\Z\}=dist_A(e,\exp(Z))=2\sqrt{\frac{\pi}{d_n(A)}}.
		\end{equation}
	
	Notice that $s_1(\bm{r},A)$ is the systole of the base flat torus $(P_0(\Gamma_{\bm{r}})\backslash \mv_0,g_{\tilde{A}})$ and $s_2(A)$ is that of the circle fiber.
	The total measure of the base flat torus is
	$$\left|\int_{P_0(\Gamma_{\bm{r}})\backslash\mv_0}vol_{g_{\tilde{A}}}\right|=\prod_{i_1}^nr_i|\det(\tilde{A})|^{-1},$$
and the systolic inequality on the torus is
\begin{equation}\label{ineqsystorus}
	s_1(\bm{r},A)\leq \tilde{C}_{2n}\left(\prod r_i|\det(\tilde{A})|^{-1}\right)^{\frac{1}{2n}}.
\end{equation}
Combining (\ref{eqs2}) and (\ref{ineqsystorus}) and Lemma \ref{lemfundamental},
we have
\begin{align*}
	sys(\Gamma_{\bm{r}}\backslash H_n,dist_A)^{2n+2}&\leq s_1(\bm{r},A)^{2n}s_2(A)^2\\
	&\leq 4\pi \tilde{C}_{2n}^{2n}\prod r_i|\det(\tilde{A})|^{-1}d_n(A)^{-1}\\
&\leq \frac{2\sqrt{2}\pi \tilde{C}_{2n}^{2n}}{\sqrt{n}}\prod r_i|\det(\tilde{A})|^{-1}\delta(A)^{-1}\\
&=\frac{2\sqrt{2}\pi \tilde{C}_{2n}^{2n}}{\sqrt{n}}\left|\int_{\Gamma_{\bm{r}}\backslash H_n}v(\mv_0,A)\right|.
\end{align*}
	
Hence we can choose $C_n=\left(\frac{2\sqrt{2}\pi \tilde{C}_{2n}^{2n}}{\sqrt{n}}\right)^{\frac{1}{2n+2}}$.
\end{proof}

In the above argument,
the optimality of the constant $C_n$ is not discussed.
The optimal constant is given in the $3$-dimensional case by using the optimal constant of a $2$-dimensional torus $\tilde{C}_2=\sqrt{\frac{2}{\sqrt{3}}}$.

In the following theorem,
notice that the diffeomorphism class of $3$-dimensional compact Heisenberg manifold is classified by a natural number $r\in D_1=\N$.

\begin{theorem}\label{thmapp2}
	For $r\in D_1$,
	there is a constant $C_r$ such thet

	\begin{itemize}
		\item[(1)]If $r\leq 4\pi\tilde{C}_2^2$,
		 then $sys(\Gamma_r\backslash H_1)=s_1(r,A)$ and 
		 $$sys(\Gamma_r\backslash H_1,dist_A)\leq C_r\left|\int_{\Gamma_r\backslash H_1}v(\mv_0,A)\right|^{\frac{1}{4}}.$$
		 Moreover the equality holds if and only if the quotient flat metric on the torus satisfies the equality condition of the systolic inequality (\ref{ineqsystorus}).
		 \item[(2)]If $r\geq 4\pi\tilde{C_2}^2$,
		 then
		 $$sys(\Gamma_r\backslash H_1,dist_A)\leq C_r\left|\int_{\Gamma_r\backslash H_1}v(\mv_0,A)\right|^{\frac{1}{4}}.$$
		 Moreover the equality holds if and only if $s_1(r,A)\geq s_2(A)$.

	\end{itemize}
\end{theorem}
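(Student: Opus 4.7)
The plan is to specialize every ingredient of the proof of Theorem \ref{thmapp1} to $n=1$, at which point both candidate systoles $s_1(r,A)$ and $s_2(A)$ are controlled by a single scalar parameter $d_1(A)$, and then to directly compare them to identify when each of the two bounds is binding.

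First I would specialize the formulas. By Lemma \ref{lemfundamental}, for $n=1$ one has $\delta(A)=\sqrt{2}\,d_1(A)$ and $|\det\tilde A|=d_1(A)$, so Proposition \ref{propminvol} (or (\ref{eqpoppvol})) gives
\begin{equation*}
\left|\int_{\Gamma_r\backslash H_1} v(\mv_0,A)\right|=\frac{r}{\sqrt{2}\,d_1(A)^2},
\end{equation*}
while $s_2(A)^2=4\pi/d_1(A)$ from (\ref{eqs2}), and Loewner's inequality applied to the base flat torus $(P_0(\Gamma_r)\backslash\mv_0,g_{\tilde A})$ yields
\begin{equation*}
s_1(r,A)^2\leq \tilde C_2^{\,2}\,\frac{r}{d_1(A)},
\end{equation*}
with equality if and only if this base torus is isometric to a (suitably scaled) hexagonal flat torus. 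Combining these with $sys=\min(s_1,s_2)$, which is the decomposition established in the proof of Theorem \ref{thmapp1}, gives
\begin{equation*}
sys(\Gamma_r\backslash H_1,dist_A)^4\;\leq\;\frac{1}{d_1(A)^2}\min\bigl(\tilde C_2^{\,4} r^2,\;16\pi^2\bigr).
\end{equation*}

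Next I would split into cases according to which entry of this minimum is the smaller one -- this is a comparison between $\tilde C_2^{\,4}r^2$ and $16\pi^2$, and hence purely a threshold condition on $r$. In the regime of case (1), the bound rearranges to $sys^4\leq \sqrt{2}\,\tilde C_2^{\,4} r\cdot\int v(\mv_0,A)$, so one defines $C_r=(\sqrt{2}\,\tilde C_2^{\,4}r)^{1/4}$; equality in the compound bound forces simultaneously $s_1^2=\tilde C_2^{\,2}r/d_1(A)$ (Loewner equality on the base, i.e.\ the equality case of (\ref{ineqsystorus})) together with $s_1\leq s_2$ so that the minimum really is $s_1$, and it is precisely the stated range of $r$ that makes these two conditions compatible. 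In the case (2) regime, the minimum is $16\pi^2$, so the bound becomes $sys^4\leq (16\sqrt 2\,\pi^2/r)\cdot\int v(\mv_0,A)$, one defines $C_r=(16\sqrt 2\,\pi^2/r)^{1/4}$, and equality forces $sys=s_2$, i.e.\ the fiber systole is the binding one, which is exactly $s_1(r,A)\geq s_2(A)$.

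The main technical point will be verifying that the two cases are complementary and exhaustive at the stated threshold on $r$, and checking that the two equality conditions are individually realizable by choosing $\tilde A$: in case (1), by choosing $\tilde A$ so that $\tilde A^{-T}$ transports the lattice $P_0(\Gamma_r)$ to a hexagonal lattice of the appropriate size while keeping $d_1(A)$ such that $s_1\leq s_2$; in case (2), by choosing $\tilde A$ so that $d_1(A)$ is large enough to force $s_2\leq s_1$. No serious analytic difficulty remains past this arithmetic bookkeeping, since every quantity involved has an explicit closed form from the preceding sections.
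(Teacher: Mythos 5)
Your proposal is correct and follows essentially the same route as the paper: write $sys=\min\{s_1(r,A),s_2(A)\}$, bound $s_1$ by Loewner's inequality on the base torus, compute $s_2$ exactly in terms of $d_1(A)$, normalize by the total Popp measure, and split at the value of $r$ where the two bounds cross, with the same equality analysis in each regime. The only discrepancy is a harmless constant: you retain the factor $\delta(A)^{-1}=(\sqrt{2}\,d_1(A))^{-1}$ from (\ref{eqpoppvol}) in the total measure, so your $C_r$ carries an extra $2^{1/8}$ relative to the paper's displayed computation, which silently drops that factor.
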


\begin{proof}
	By the systolic inequality (\ref{ineqsystorus}),
	we have
	\begin{equation}
		\frac{s_1(r,A)}{\left|\int_{\Gamma_r\backslash H_1}v(\mv_0,A)\right|^{\frac{1}{4}}}=\frac{s_1(r,A)}{r^{\frac{1}{4}}|\det(\tilde{A})|^{-\frac{1}{2}}}=\frac{s_1(r,A)}{(r|\det(\tilde{A})|^{-1})^{\frac{1}{2}}}\cdot \sqrt[4]{r}\leq\tilde{C}_2\sqrt[4]{r}.\label{ineqfracs1}
	\end{equation}
	On the other hand,
	by Lemma \ref{lemfundamental} and \ref{lemvertical},
	we have
	\begin{equation}
		\frac{s_2(A)}{\left|\int_{\Gamma_r\backslash H_1}v(\mv_0,A)\right|^{\frac{1}{4}}}=\frac{2\sqrt{\frac{\pi}{d_1(A)}}}{\left(r\left|\det(\tilde{A})\right|^{-1}\right)^{\frac{1}{4}}}=\frac{2\sqrt{\frac{\pi}{d_1(A)}}}{\left(rd_1(A)^{-1}\right)^{\frac{1}{4}}}=\frac{2\sqrt{\pi}}{\sqrt[4]{r}}.\label{eqfracs2}
	\end{equation}

Thus the ratio of the systole and the total measure is bounded by
\begin{equation}
	\frac{sys(\Gamma_r\backslash H_1,dist_A)}{\left|\int_{\Gamma_r\backslash H_1}v(\mv_0,A)\right|}=\frac{\min\{s_1(r,A),s_2(A)\}}{\left|\int_{\Gamma_r\backslash H_1}v(\mv_0,A)\right|}
	\leq\min\left\{\tilde{C}_2\sqrt[4]{r},\frac{2\sqrt{\pi}}{\sqrt[4]{r}}\right\}.
\end{equation}

If $\tilde{C}_2\sqrt[4]{r}\leq \frac{2\sqrt{\pi}}{\sqrt[4]{r}}$,
then the systole of the base flat torus is equal to the ambient Heisenberg manifold,
and the assertion of the case (1) holds with the constant $C_r=\tilde{C}_2\sqrt[4]{r}$.

If $\tilde{C}_2\sqrt[4]{r}\geq \frac{2\sqrt{\pi}}{\sqrt[4]{r}}$,
then the case (2) holds with the constant $C_r=\frac{2\sqrt{\pi}}{\sqrt[4]{r}}$,
and the equality holds if and only if the systole of the circle fiber is equal to the ambient Heisenberg manifold.

\end{proof}

The above theorems are assertions on the Popp's volume.
However it is difficult to show the systolic inequality for the minimal Popp's volume.
The reason is the choice of the exponent of the total measure.
As we can see in the following example,
there is no hope to obtain the systolic inequality with a fixed exponent.

\begin{example}
	Let us consider a compact Heisenberg manifold $\Gamma_1\backslash H_1$ for $1\in D_1$ and the following sub-Riemannian metrics.
Let $\{A_k\}_{k\in\N}$ be a sequence in $\mathcal{A}$ given by
$$A_k=\begin{pmatrix}
	k & 0 & 0\\
0 & k & 0\\
0 & 0 & 1
\end{pmatrix}.$$
A straightforward calculation shows that
$$sys(\Gamma_1\backslash H_1,dist_{A_k})=k^{-1},$$
and
$$\left|\int_{\Gamma_1\backslash H_1}v(A_k)\right|=\frac{1}{\sqrt{2}}k^{-4}.$$
This implies that for any positive number $C>0$,
there is $k\in\N$ such that
$$sys(\Gamma_1\backslash H_1,dist_{A_k})\geq C\left|\int_{\Gamma_1\backslash H_1}v(A_k)\right|^{\frac{1}{3}}.$$

Let $\{B_k\}_{k\in\N}$ be a sequence of matrices in $\mathcal{A}$ given by
$$B_k=\begin{pmatrix}
	k^{-1} & 0 & 0\\
	0 & k^{-1} & 0\\
	0 & 0 & k^{-1}
\end{pmatrix}.$$
Then we have
$$sys(\Gamma_1\backslash H_1,dist_{B_k})=k$$
and
$$\left|\int_{\Gamma_1\backslash H_1}v(B_k)\right|=k^3.$$
This implies that for any positive number $C>0$,
there is $k\in \N$ such that
$$sys(\Gamma_1\backslash H_1,dist_{B_k})\geq C\left|\int_{\Gamma_1\backslash H_1}v(B_k)\right|^{\frac{1}{4}}.$$

The above two examples show that the systolic inequality does not holds with a fixed exponent.
\end{example}

\end{document}